\newtheorem{thm}{Theorem}[section]
\newtheorem{lemma}[thm]{Lemma}
\newtheorem{prop}[thm]{Proposition}
\newtheorem{defi}[thm]{Definition}
\theoremstyle{remark}
\newtheorem{rmk}[thm]{Remark}
\renewcommand{\div}{\operatorname{div}}
\newcommand{\E}{\mathop{{}\mathbb{E}}}
\newcommand{\cF}{\mathscr{F}}
\newcommand{\cL}{\mathscr{L}}
\renewcommand{\P}{\mathbb{P}}
\newcommand{\erre}{\mathbb{R}}
\DeclareMathAlphabet{\mathbbmsl}{U}{bbm}{m}{sl}
\newcommand{\bL}{\mathbbmsl{L}}
\newcommand{\embed}{\hookrightarrow}
\DeclarePairedDelimiter\abs{\lvert}{\rvert}
\DeclarePairedDelimiter\norm{\lVert}{\rVert}
\DeclarePairedDelimiterX\ip[2]{\langle}{\rangle}{#1,#2}
\numberwithin{equation}{section}
\newif\ifbozza
\begin{document}

\title[SPDEs in divergence form]{Strong solutions to SPDEs with \\
  monotone drift in divergence form}
\author{Carlo Marinelli}
\address[Carlo Marinelli]{Department of Mathematics, University
      College London, Gower Street, London WC1E 6BT, United
      Kingdom.}
\urladdr{http://goo.gl/4GKJP}
\author{Luca Scarpa}
\address[Luca Scarpa]{Department of Mathematics, University
      College London, Gower Street, London WC1E 6BT, United
      Kingdom.}
\email{luca.scarpa.15@ucl.ac.uk}
\subjclass[2010]{Primary: 60H15, 47H06; Secondary: 46N30}
\keywords{Stochastic evolution equations, singular drift, divergence
  form, multiplicative noise, monotone operators.}
\date{September 6, 2017}

\begin{abstract}
  We prove existence and uniqueness of strong solutions, as well as
  continuous dependence on the initial datum, for a class of fully
  nonlinear second-order stochastic PDEs with drift in divergence
  form. Due to rather general assumptions on the growth of the
  nonlinearity in the drift, which, in particular, is allowed to grow
  faster than polynomially, existing techniques are not applicable. A
  well-posedness result is obtained through a combination of a priori
  estimates on regularized equations, interpreted both as stochastic
  equations as well as deterministic equations with random
  coefficients, and weak compactness arguments. The result is
  essentially sharp, in the sense that no extra hypotheses are needed,
  bar continuity of the nonlinear function in the drift, with respect
  to the deterministic theory.
\end{abstract}

\maketitle


\section{Introduction}
\label{sec:intro}
Let us consider the nonlinear stochastic partial differential equation
\begin{equation}
  \label{eq:0}
  du(t) - \div\gamma(\nabla u(t))\,dt = B(t, u(t))\,dW(t),
  \qquad u(0)=u_0,
\end{equation}
on $L^2(D)$, where $D \subset \erre^n$ is a bounded domain with smooth
boundary. Here $\gamma$ is the gradient of a continuously
differentiable convex function on $\erre^n$ growing faster than
linearly at infinity, the divergence is interpreted in the usual
variational sense, $W$ is a cylindrical Wiener process, and $B$ is a
map with values in the space of Hilbert-Schmidt operators satisfying
suitable Lipschitz continuity hypotheses. Precise assumptions on the
data of the problem are given in {\S}\ref{sec:res} below.

Our main result is the well-posedness of \eqref{eq:0}, in the strong
probabilistic sense, without any polynomial growth condition on
$\gamma$ nor any boundedness assumption on the noise (see
Theorem~\ref{thm:main} below). The lack of growth and coercivity
assumptions on $\gamma$ makes it impossible to apply the variational
approach by Pardoux and Krylov-Rozovski\u{\i} (see \cite{KR-spde,
Pard}), which is the only known general technique to solve nonlinear
stochastic PDEs without linear terms in the drift such as
\eqref{eq:0}, with the possible exception of viscosity solutions, a
theory of which, however, does not seem to be available for such
equations. On the other hand,
we recall that, if $\gamma$ is coercive
and has polynomial growth, the results in \textsl{op.~cit.} provide a
fully satisfactory well-posedness result for \eqref{eq:0}. 

The available literature dealing with stochastic equations in
divergence form such as \eqref{eq:0} is very limited and, to the best
of our knowledge, entirely focused on the case where $\gamma$
satisfies the above-mentioned coercivity and growth assumptions: see,
e.g., \cite{Liu-p} and the bibliography of \cite{LiuRo} for results on
the $p$-Laplace equation, which corresponds to the case
$\gamma(x)=|x|^{p-1}x$, and \cite{luca} on stochastic equations in
divergence form with doubly nonlinear drift.  The main novelty of this
paper is thus to provide a satisfactory well-posedness result in the
strong sense for such divergence-form equations under neither
coercivity nor growth assumptions on $\gamma$.
On the other hand, it is worth recalling that well-posedness results
are available for other classes of monotone SPDEs with nonlinearities
satisfying no coercivity and growth conditions, most notably the
stochastic porous media equation: see, e.g.,
\cite{BDPR-porous}. However, the structure of divergence-form
equations such as \eqref{eq:0} is radically different. Indeed, as is
well-known, the porous media operator is quasilinear, while the
divergence-type operator in \eqref{eq:0} is fully nonlinear. Moreover,
the monotonicity properties (hence the dynamics associated to the the
solutions) are different: the porous media operator is monotone in
$H^{-1}$, whereas the divergence-form operator is monotone in $L^2$.

As is often the case in the treatment of evolution equations of
monotone type, the first step consists in the regularization of
\eqref{eq:0}, replacing $\gamma$ with its Yosida approximation (a
monotone Lipschitz-continuous function), thus obtaining a family of
equations for which well-posedness is known to hold (in our
case, we also need to add a ``small'' elliptic term in the drift as
well as to smooth the diffusion coefficient $B$). In a second step,
one proves that the solutions to the regularized equations are compact
in suitable topologies, so that, by passage to the limit in the
regularization parameters (roughly speaking), a process can be
constructed that, in a final step, is shown to actually be the unique
solution to \eqref{eq:0} and to depend continuously on the initial
datum.  It is well known that the last two steps are the more
challenging ones, and our problem is no exception.

The approach we follow combines elements of the variational method and
\textsl{ad hoc} arguments, most notably a priori estimates on the
solutions to regularized equations, weak compactness techniques, and a
generalized version of It\^o's formula for the square of the norm
under minimal integrability assumptions.  A crucial role is played by
a mix of pathwise and ``averaged''\footnote{That is, in expectation.}
a priori estimates. Even though the approach is reminiscent of that
in \cite{cm:luca}, the problem we consider here is of a completely
different nature, and, correspondingly, new ideas are needed. In
particular, the absence of a linear term in the drift precludes the
possibility of applying a wealth of techniques available for
semi-linear problems. For instance, the strong pathwise compactness
criteria used in \textsl{op.~cit.} are no longer available, so that we
have to rely on weak compactness arguments only. This way one can
construct a limit process, but its identification as a solution
expectedly presents major new issues with respect to the case where
stronger compactness is available.  Moreover, a rather subtle
measurability problem arises from the fact that the divergence is not
injective, which is the reason for assuming $\gamma$ to be a
continuous monotone map, and not just a maximal monotone graph on
$\erre^n \times \erre^n$. A (less regular) solution to the more
general problem when $\gamma$ satisfies only the latter condition will
appear elsewhere. We remark that the results obtained here hold under
hypotheses that are as general as those of the deterministic theory,
except for the continuity assumption on $\gamma$ (see, e.g.,
\cite[pp.~207--ff.]{Barbu:type}).

\medskip

\noindent
\textbf{Acknowledgments.} The authors were partially supported by a
grant of The Royal Society. The first-named author is very grateful to
Prof.~S.~Albeverio for the warm hospitality and the excellent working
conditions at the Interdisziplin\"ares Zentrum f\"ur Komplexe Systeme,
University of Bonn, where parts of this work were written.


\ifbozza\newpage\else\fi
\section{Main result}
\label{sec:res}
Given a positive real number $T$, let
$(\Omega,\cF,(\cF_t)_{t\in[0,T]},\P)$ be a filtered probability space,
fixed throughout, satisfying the so-called ``usual conditions''. We
shall denote a cylindrical Wiener process on a separable Hilbert
space $H$ by $W$.

For any two Hilbert spaces $U$ and $V$, the space of Hilbert-Schmidt
operators from $U$ to $V$ will be denoted by $\cL^2(U,V)$.
Let $D$ be a smooth bounded domain of $\erre^n$, and assume that 
a map
\[
B: \Omega \times [0,T] \times L^2(D) \longrightarrow \cL^2(H,L^2(D))
\]
is given such that, for a constant $C>0$,
\[
\norm[\big]{B(\omega,t,x) - B(\omega,t,y)}_{\cL^2(H,L^2(D))}
\leq C \norm[\big]{x-y}_{L^2(D)}
\]
for all $\omega \in \Omega$, $t \in [0,T]$, $x, y \in L^2(D)$. To
avoid trivial situations, we also assume that, for an
$x_0 \in L^2(D)$, $B(\omega,t,x_0) < C$ for all $\omega$ and $t$. This
implies that $B$ grows at most linearly in $x$, uniformly over
$\omega$ and $t$. Furthermore, the map
$(\omega,t) \mapsto B(\omega,t,x)h$ is assumed to be measurable and
adapted for all $x \in L^2(D)$ and $h \in H$.

We assume that $\gamma$ is the subdifferential of a continuously
differentiable convex function $k:\erre^n \to \erre_+$ such that
$k(0)=0$,
\[
\lim_{\abs{x} \to \infty} \frac{k(x)}{\abs{x}} = + \infty
\]
(i.e. $k$ is superlinear at infinity), and
\[
\limsup_{\abs{x}\to\infty} \frac{k(-x)}{k(x)} < \infty.
\]
Then $\gamma:\erre^n \to \erre^n$ is a continuous maximal monotone
map, i.e.
\[
\bigl(\gamma(x)-\gamma(y)\bigr) \cdot (x-y) \geq 0
\qquad \forall x,y \in \erre^n
\]
(the centered dot stands for the Euclidean scalar product in
$\erre^n$), and (the graph of) $\gamma$ is maximal with respect to the
order by inclusion. Moreover, the convex conjugate function
$k^*:\erre^n \to \erre_+$ of $k$, defined as
\[
k^*(y) = \sup_{r\in\erre^n} \bigl( y \cdot r - k(r) \bigr),
\]
is itself convex and superlinear at infinity. For these facts of
convex analysis, as well as those used in the sequel, we refer to,
e.g., \cite{lema}.

All assumptions on $B$ and $\gamma$ (hence also on $k$) are assumed to
be in force from now on.
\begin{defi}
  Let $u_0$ be an $L^2$-valued $\cF_0$-measurable random variable. A
  \emph{strong solution} to equation \eqref{eq:0} is a process
  $u:\Omega \times [0,T] \to L^2(D)$ satisfying the following
  properties:
  \begin{itemize}
  \item[(i)] $u$ is measurable, adapted and
    \[
    u \in L^1(0,T;W^{1,1}_0(D))
    \]
  \item[(ii)] $B(\cdot,u)h$ is measurable and adapted for all
    $h \in H$ and
    \[
    B(\cdot,u) \in L^2(0,T;\cL^2(H,L^2(D))) \qquad \P\text{-a.s.};
    \]
  \item[(iii)] $\gamma(\nabla u)$ is an $L^1(D)^n$-valued measurable
    adapted process with
    \[
    \gamma(\nabla u) \in L^1(0,T;L^1(D)^n) \qquad \P\text{-a.s.};
    \]
  \item[(iv)] one has, as an equality in $L^2(D)$,
   \begin{equation}
   \label{eq:0i}
   u(t) - \int_0^t{\div \gamma(\nabla u(s))\,ds} 
   = u_0 + \int_0^t B(s,u(s))\,dW(s)
   \qquad \P\text{-a.s.}
   \end{equation}
   for all $t \in [0,T]$.
  \end{itemize}
\end{defi}
Since $\gamma(\nabla u)$ is only assumed to take values in $L^1(D)^n$,
the second term on the left-hand side of \eqref{eq:0i} does not
belong, a priori, to $L^2(D)$. The identity \eqref{eq:0i} has to be
interpreted to hold in the sense of distributions, so that the term
containing $\gamma(\nabla u)$ takes values in $L^2(D)$ by
difference. In fact, the conditions on $B$ in (i) imply that the
stochastic integral in \eqref{eq:0i} is an $L^2(D)$-valued local
martingale.

Let $\mathscr{K}$ be the set of measurable adapted processes $\phi:
\Omega \times [0,T] \to L^2(D)$ such that
\begin{align*}
&\E\sup_{t\leq T} \norm[\big]{\phi(t)}^2_{L^2(D)} 
+ \E\int_0^T \norm[\big]{\phi(t)}_{W^{1,1}_0(D)}\,dt < \infty,\\[4pt]
&\E\int_0^T\!\!\int_D \abs[\big]{\gamma(\nabla\phi(t,x))}\,dx\,dt
< \infty,\\[4pt]
&\E\int_0^T\!\!\int_D \bigl( k(\nabla\phi(t,x)) +%
k^*(\gamma(\nabla\phi(t,x))) \bigr)\,dx\,dt < \infty.
\end{align*}

\medskip

Our main result is the following.
\begin{thm}
  \label{thm:main}
  Let $u_0 \in L^2(\Omega;L^2(D))$ be $\cF_0$-measurable. Then
  \eqref{eq:0} admits a strong solution $u$, which is unique within
  $\mathscr{K}$. Moreover, $u$ has weakly continuous paths in $L^2(D)$
  and the solution map $u_0 \mapsto u$ is Lipschitz-continuous from
  $L^2(\Omega;L^2(D))$ to $L^2(\Omega;L^\infty(0,T;L^2(D)))$.
\end{thm}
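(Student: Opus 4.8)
The plan is to follow the three-step regularization scheme outlined in the introduction, but with careful attention to the compactness and identification issues. First, I would introduce a family of regularized equations indexed by $\lambda, \varepsilon > 0$, replacing $\gamma$ by its Yosida approximation $\gamma_\lambda$ (which is monotone and Lipschitz), adding a viscous term $-\varepsilon \Delta u$ to the drift, and smoothing $B$ to a map $B_\varepsilon$ that is Lipschitz with values in Hilbert-Schmidt operators and takes values in a smaller space. For these equations the classical variational theory of Pardoux and Krylov--Rozovski\u{\i} applies (the drift $x \mapsto \varepsilon \Delta x + \div \gamma_\lambda(\nabla x)$ is coercive, hemicontinuous, monotone and of polynomial --- indeed linear --- growth on the Gelfand triple $H^1_0(D) \embed L^2(D) \embed H^{-1}(D)$), yielding a unique variational solution $u_{\lambda,\varepsilon}$.

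The second and technically heaviest step is the derivation of a priori estimates uniform in $\lambda$ and $\varepsilon$. Applying It\^o's formula to $\norm{u_{\lambda,\varepsilon}(t)}^2_{L^2}$, using monotonicity of $\gamma_\lambda$, the linear growth of $B$, the Burkholder--Davis--Gundy inequality and Gronwall's lemma, I expect to obtain a bound on $\E \sup_{t \le T} \norm{u_{\lambda,\varepsilon}(t)}^2_{L^2}$ together with $\E \int_0^T \varepsilon \norm{\nabla u_{\lambda,\varepsilon}}^2_{L^2}\,dt$ and, crucially, $\E \int_0^T\!\!\int_D \bigl( k(\nabla u_{\lambda,\varepsilon}) + k^*(\gamma_\lambda(\nabla u_{\lambda,\varepsilon})) \bigr)\,dx\,dt$ via the Fenchel--Young identity $\gamma_\lambda(\nabla u) \cdot \nabla u = k(\nabla u) + k^*(\gamma_\lambda(\nabla u))$ (up to the usual $\lambda$-corrections, which the superlinearity assumptions on $k$ and $k^*$ control, the $\limsup$ condition on $k(-x)/k(x)$ being what prevents one-sided degeneration). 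Superlinearity of $k$ then gives, via de la Vall\'ee-Poussin, uniform integrability of $\nabla u_{\lambda,\varepsilon}$ in $L^1(\Omega \times (0,T) \times D)^n$, and superlinearity of $k^*$ does the same for $\gamma_\lambda(\nabla u_{\lambda,\varepsilon})$; hence weak relative compactness in $L^1$. I would also want a pathwise estimate (in $\P$-probability, or after localization) to complement the averaged ones, as the introduction emphasizes. From the equation itself one reads off a bound on $\div \gamma_\lambda(\nabla u_{\lambda,\varepsilon})$ in, say, $L^2(0,T;H^{-1})$ plus a martingale part, giving equicontinuity-type control needed later.

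In the third step I would extract, along a subsequence, weak (or weak-$*$) limits: $u_{\lambda,\varepsilon} \rightharpoonup u$ in suitable spaces, $\nabla u_{\lambda,\varepsilon} \rightharpoonup \nabla u$ weakly in $L^1$, $\gamma_\lambda(\nabla u_{\lambda,\varepsilon}) \rightharpoonup \xi$ weakly in $L^1(D)^n$, and $B_\varepsilon(\cdot, u_{\lambda,\varepsilon}) \to B(\cdot,u)$ (using the Lipschitz property of $B$ and strong convergence of $u$ in $L^2(\Omega\times(0,T)\times D)$, obtained from a compactness argument on the time-regularized equation, or else a Skorokhod-type / stochastic compactness argument). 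Passing to the limit in the regularized equation gives $du - \div \xi\, dt = B(t,u)\,dW$. The genuinely hard part, as the authors signal, is the identification $\xi = \gamma(\nabla u)$: since only weak $L^1$ convergence of the gradients is available, one cannot use Minty's trick directly. Here I would exploit convexity: by weak lower semicontinuity of the convex functional $(v,\eta) \mapsto \E\int_0^T\!\!\int_D \bigl(k(v) + k^*(\eta)\bigr)$ together with an It\^o-formula computation of the "energy" $\E\int\!\int \gamma_\lambda(\nabla u_{\lambda,\varepsilon})\cdot\nabla u_{\lambda,\varepsilon}$ (which equals $\E\int\!\int (k(\nabla u_{\lambda,\varepsilon}) + k^*(\gamma_\lambda(\nabla u_{\lambda,\varepsilon})))$), one shows $\E\int\!\int \xi \cdot \nabla u \ge \E\int\!\int (k(\nabla u) + k^*(\xi))$; the reverse inequality is the Fenchel--Young inequality, so equality holds pointwise a.e., which by the equality case of Fenchel--Young forces $\xi \in \gamma(\nabla u)$ --- and since $\gamma$ is a (single-valued) continuous map, $\xi = \gamma(\nabla u)$. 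This is precisely where continuity of $\gamma$ (rather than maximal monotonicity of a graph) is used, and where the subtle measurability point concerning non-injectivity of the divergence must be handled; I would treat the equation as a deterministic one with random coefficients, $\omega$ by $\omega$, to legitimately apply the deterministic convex-analysis identification, then reassemble measurably. Finally, uniqueness within $\mathscr{K}$ and the Lipschitz dependence on $u_0$ follow by applying the generalized It\^o formula for $\norm{u_1 - u_2}^2_{L^2}$ to two solutions, using monotonicity of $\gamma$ to drop the drift difference term, the Lipschitz bound on $B$, BDG and Gronwall; weak continuity of paths in $L^2(D)$ comes from the same It\^o formula together with the standard argument that an $L^2$-weakly-c\`adl\`ag martingale-plus-bounded-variation process with continuous $L^2$-norm is weakly continuous.
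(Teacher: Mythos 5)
Your overall architecture (Yosida regularization plus viscosity, a priori estimates via the Fenchel--Young identity and superlinearity of $k$, $k^*$, weak $L^1$ compactness by de la Vall\'ee Poussin/Dunford--Pettis, identification of the nonlinearity through an energy inequality and convexity, uniqueness via a generalized It\^o formula) matches the paper's in spirit. But there is one genuine gap, and it sits exactly at the point the introduction warns about. To pass to the limit in the multiplicative noise term you invoke ``strong convergence of $u$ in $L^2(\Omega\times(0,T)\times D)$, obtained from a compactness argument on the time-regularized equation, or else a Skorokhod-type argument.'' Neither option is available here. Since the drift has no coercivity in any Sobolev norm, the only spatial control on $u_{\lambda,\varepsilon}$ beyond $L^\infty_t L^2_x$ is a uniformly integrable $L^1$ bound on $\nabla u_{\lambda,\varepsilon}$; no Aubin--Lions type argument yields strong convergence in $L^2_{t,x}$, and in any case compactness in the $\omega$ variable is never available, so one cannot get strong convergence in $L^2(\Omega\times(0,T)\times D)$ this way. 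A Skorokhod argument would change the probability space and produce only a probabilistically weak solution; you would then need a Yamada--Watanabe step (delicate for equations with $L^1$-valued drift) that you do not supply. The paper sidesteps this entirely by a different decomposition: it first solves the equation with \emph{additive} noise $G$, obtaining strong convergence of the approximations $u^\varepsilon$ (with $G^\varepsilon=(I-\varepsilon\Delta)^{-m}G$) in $\bL^2 L^\infty_t L^2_x$ not from compactness but from a Cauchy estimate --- monotonicity of $\gamma_\lambda$ makes $\E\sup_t\norm{u^\varepsilon-u^\delta}^2$ controlled by $\E\int_0^T\norm{G^\varepsilon-G^\delta}^2_{\cL^2(H,L^2)}$ --- and only then reinstates the $u$-dependence of the noise by a Banach fixed point in an exponentially weighted space $E_\alpha$, using the contraction estimate \eqref{eq:clp}. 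Your scheme, which couples $\lambda$, $\varepsilon$ and the $u$-dependence of $B$ in a single limit, cannot be closed as written.

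Two smaller points. First, the ``It\^o-formula computation of the energy'' $\int\!\!\int \xi\cdot\nabla u$ for the \emph{limit} equation is itself nontrivial: $\xi$ and $\nabla u$ are only in $L^1$, so the standard chain rule does not apply; the paper devotes Proposition~\ref{prop:ito} and Lemma~\ref{lm:testing} to this, using mollification by $(I-\delta\Delta)^{-m}$, Jensen's inequality for sub-Markovian operators, and the Orlicz-type bound $k(c\nabla u)+k^*(c\xi)\in L^1$ (this is where the hypothesis $\limsup k(-x)/k(x)<\infty$ enters, not merely to ``prevent one-sided degeneration'' of the a priori bounds). You cite the generalized It\^o formula for uniqueness, but you also need it (or its deterministic counterpart) for the identification step, and its proof is a substantive ingredient. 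Second, measurability of the limit: since the convergences are extracted $\omega$ by $\omega$ along $\omega$-dependent subsequences, one must use pathwise uniqueness of the limit to upgrade to convergence along the full sequence and then Mazur's lemma to recover predictability; you gesture at ``reassembling measurably'' but this is precisely the subtle point, and it is also where single-valuedness of $\gamma$ (non-injectivity of the divergence) is genuinely needed.
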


We do not know whether well-posedness continues to hold also without
the condition that the solution belongs to $\mathscr{K}$. This
assumption, in fact, plays a crucial role in the proof of uniqueness.

\medskip

Abbreviated notation for function spaces will be used from now on:
Lebesgue and Sobolev spaces on $D$ will be denoted without explicit
mention of $D$ itself; for any $p \in [1,\infty]$, $L^p(\Omega)$ will
be denoted by $\bL^p$, $L^p(0,T)$ by $L^p_t$, and $L^p(D)$ sometimes
by $L^p_x$. Mixed-norm spaces will be denoted just by juxtaposition,
e.g. $\bL^pL^q_tL^r_x$ to mean $L^p(\Omega;L^q(0,T;L^r(D)))$ and
$L^1_{t,x}$ to mean $L^1([0,T] \times D)$.


\ifbozza\newpage\else\fi
\section{An It\^o formula for the square of the norm}
\label{sec:aux}
We prove an It\^o formula for the square of the $L^2$-norm of a class
of processes with minimal integrability conditions. This is an
essential tool to prove uniqueness of strong solutions and their
continuous dependence on the initial datum in Sections \ref{sec:add}
and \ref{sec:pf} below, and it is interesting in its own right.
\begin{prop}
  \label{prop:ito}
  Assume that
  \[
  y(t) + \alpha\int_0^t y(s)\,ds
  - \int_0^t \div \zeta(s) \,ds =
  y_0 + \int_0^t C(s)\,dW(s)
  \]
  holds in $L^2$ for all $t \in [0,T]$ $\P$-a.s., where
  $\alpha \geq 0$ is a constant,
  \[
  y:\Omega \times [0,T] \to L^2, \qquad \zeta:\Omega \times [0,T] \to L^1,
  \qquad C: \Omega \times [0,T] \to \cL^2(H,L^2)
  \]
  are measurable adapted processes such that
  \[
  y \in \bL^2 L^\infty_t L^2_x \cap \bL^1 L^1_t W^{1,1}_0,
  \qquad \zeta \in \bL^1 L^1_{t,x},
  \qquad C \in \bL^2 L^2_t \cL^2(H,L^2),
  \]
  and $y_0$ is an $\cF_0$-measurable $L^2$-valued random variable with
  $\E\norm{y_0}^2<\infty$. If there exists a constant $c>0$ such that
  \[
  \E\int_0^T\!\!\int_D \bigl(k(c\nabla y)
  + k^*(c\zeta)\bigr) < \infty,
  \]
  then 
  \begin{align*}
    &\frac12 \norm{y(t)}^2 
    + \alpha \int_0^t\norm{y(s)}^2\,ds
    + \int_0^t\!\!\int_D \zeta(s,x) \cdot \nabla y(s,x)\,dx\,ds\\
    &\hspace{3em} = \frac12\norm{y_0}^2
    + \frac12 \int_0^t \norm[\big]{C(s)}^2_{\cL^2(H,L^2)}\,ds
    + \int_0^t y(s) C(s)\,dW(s)
  \end{align*}
  for all $t\in[0,T]$ $\P$-almost surely.
\end{prop}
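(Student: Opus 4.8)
The plan is to obtain the formula by a double approximation: first regularize in space to bring $\zeta$ into $L^2$, then regularize in a way that makes the resulting process genuinely $L^2$-valued and differentiable, apply the classical It\^o formula for the square of the norm (which is available for variational solutions in a Gelfand triple), and finally pass to the limit using the integrability hypothesis $\E\int_0^T\!\!\int_D(k(c\nabla y)+k^*(c\zeta))<\infty$ to control the critical cross term $\int\!\!\int\zeta\cdot\nabla y$. Concretely, I would mollify the equation: let $(J_\varepsilon)$ be a family of smoothing operators on $L^2$ (say resolvents of the Laplacian, $J_\varepsilon=(I-\varepsilon\Delta)^{-1}$ with Neumann or the appropriate boundary condition, which commute with $\div$ and are self-adjoint contractions on $L^2$ and bounded on $W^{1,1}_0$). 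Setting $y_\varepsilon=J_\varepsilon y$, $\zeta_\varepsilon=J_\varepsilon\zeta$, $C_\varepsilon=J_\varepsilon C$, $y_{0,\varepsilon}=J_\varepsilon y_0$, the equation
\[
y_\varepsilon(t)+\alpha\int_0^t y_\varepsilon(s)\,ds-\int_0^t\div\zeta_\varepsilon(s)\,ds=y_{0,\varepsilon}+\int_0^t C_\varepsilon(s)\,dW(s)
\]
holds in, say, $H^1$ (since $\div\zeta_\varepsilon\in L^2$ and $y_\varepsilon$ is smoother), so $y_\varepsilon$ is a continuous $L^2$-valued semimartingale to which the standard It\^o formula applies in the triple $W^{1,2}_0\embed L^2\embed W^{-1,2}$, giving
\begin{align*}
\tfrac12\norm{y_\varepsilon(t)}^2+\alpha\int_0^t\norm{y_\varepsilon(s)}^2\,ds+\int_0^t\!\!\int_D\zeta_\varepsilon\cdot\nabla y_\varepsilon
&=\tfrac12\norm{y_{0,\varepsilon}}^2+\tfrac12\int_0^t\norm{C_\varepsilon(s)}^2_{\cL^2(H,L^2)}\,ds\\
&\quad+\int_0^t y_\varepsilon(s)C_\varepsilon(s)\,dW(s).
\end{align*}

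Then I would let $\varepsilon\to0$. The terms not involving the product $\zeta\cdot\nabla y$ are routine: $y_{0,\varepsilon}\to y_0$ and $C_\varepsilon\to C$ in the relevant $L^2$ spaces by standard properties of mollifiers, the stochastic integral converges (after passing to a subsequence, a.s. uniformly in $t$) by the Burkholder–Davis–Gundy inequality once $y_\varepsilon C_\varepsilon\to yC$ in $\bL^2L^2_t\cL^2(H,\erre)$, and $\norm{y_\varepsilon(t)}^2\to\norm{y(t)}^2$ for a.e.\ $t$ along a subsequence. The heart of the matter is the convergence
\[
\int_0^t\!\!\int_D\zeta_\varepsilon\cdot\nabla y_\varepsilon\longrightarrow\int_0^t\!\!\int_D\zeta\cdot\nabla y,
\]
where neither factor is better than $L^1$ in $(\omega,t,x)$. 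Here I would use the Fenchel–Young inequality $\zeta_\varepsilon\cdot\nabla y_\varepsilon\le k^*(c\zeta_\varepsilon)+k(c\nabla y_\varepsilon)$ together with the fact that $J_\varepsilon$ is a contraction and $k,k^*$ are convex with $k(0)=k^*(0)=0$: Jensen's inequality for the mollifier gives $\E\int_0^T\!\!\int_D k(c\nabla y_\varepsilon)\le\E\int_0^T\!\!\int_D k(c\nabla y)$ and likewise for $k^*(c\zeta_\varepsilon)$, so the integrands $\zeta_\varepsilon\cdot\nabla y_\varepsilon$ are uniformly integrable on $\Omega\times[0,T]\times D$ by the de la Vallée-Poussin criterion (superlinearity of $k$ and $k^*$ furnishes the required superlinear bound after a truncation argument splitting the positive and negative parts). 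Combined with the a.e.\ convergence $\zeta_\varepsilon\cdot\nabla y_\varepsilon\to\zeta\cdot\nabla y$ (along a subsequence, using $\nabla y_\varepsilon\to\nabla y$ in $L^1$ and $\zeta_\varepsilon\to\zeta$ in $L^1$), Vitali's theorem yields the desired convergence, and passing to the limit in the displayed identity completes the proof.

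I expect the cross-term convergence to be the main obstacle, for two linked reasons: one must be careful that $J_\varepsilon$ maps $W^{1,1}_0$ to itself with norm $\le 1$ (or at least boundedly) and commutes with $\nabla$ in the appropriate sense so that $\nabla y_\varepsilon=J_\varepsilon\nabla y$, which dictates the precise choice of mollifier and boundary condition; and one must handle the sign issue in $k$, $k^*$ — the hypothesis $\limsup_{\abs x\to\infty}k(-x)/k(x)<\infty$ is exactly what allows the bound on the negative part of $\zeta_\varepsilon\cdot\nabla y_\varepsilon$ to be reduced to a bound on $k(c\nabla y_\varepsilon)$ (and symmetrically for $k^*$), which is needed for uniform integrability from below as well as above. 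A minor additional point is that the limit identity is first obtained for a.e.\ $t$, and then for all $t$ by the weak continuity of $y$ in $L^2$ (which can itself be read off from the equation) together with continuity of the right-hand side.
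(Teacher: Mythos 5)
Your proposal is correct and follows essentially the same route as the paper: regularize with resolvents of the Dirichlet Laplacian, apply the classical It\^o formula to the smoothed equation, and pass to the limit in the cross term via the Fenchel--Young inequality, Jensen's inequality for the sub-Markovian resolvent, the hypothesis $\limsup_{\abs{x}\to\infty}k(-x)/k(x)<\infty$ for the negative part, and Vitali's theorem -- exactly the paper's argument. The only adjustment is that a single resolvent $(I-\varepsilon\Delta)^{-1}$ does not suffice: since $\zeta$ is merely $L^1$, $\div\zeta$ lies only in $H^{-r}$ for some possibly large $r$, so one must take a power $(I-\varepsilon\Delta)^{-m}$ with $m$ large enough to land in $H^1_0\cap W^{1,\infty}$, as the paper does.
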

\begin{proof}
  Note that $\div\zeta \in (W^{1,\infty}_0)'$, hence, by Sobolev
  embedding theorems and duality, there exists a positive integer $r$
  such that $\div\zeta \in H^{-r}$. Therefore, denoting the Dirichlet
  Laplacian on $L^2(D)$ by $\Delta$, there also exists a positive
  integer $m$ such that $(I-\delta\Delta)^{-m}$, $\delta>0$, maps
  $H^{-r}$ and (a fortiori) $L^2$ to $H^1_0 \cap W^{1,\infty}$. Using
  the notation $h^\delta:=(I-\delta\Delta)^{-m}h$, it is readily seen
  that
  \[
  y^\delta(t)+\alpha\int_0^ty^\delta(s)\,ds-\int_0^t\div\zeta^\delta(s)\,ds=
  y^\delta_0+\int_0^tT^\delta(s)\,dW(s)
  \]
  for all $t\in[0,T]$ $\P$-a.s. as an identity in $L^2$, for which
  It\^o's formula yields
  \begin{align*}
  &\frac12 \norm[\big]{y^\delta(t)}^2 
  + \alpha \int_0^t \norm[\big]{y^\delta(s)}^2\,ds
  + \int_0^t\!\!\int_D \zeta^\delta \cdot \nabla y^\delta\\
  &\hspace{3em} = \frac12 \norm[\big]{y^\delta_0}^2
  + \frac12 \int_0^t \norm[\big]{C^\delta(s)}^2_{\cL^2(H,L^2)}\,ds
    +\int_0^t y^\delta(s) C^\delta(s)\,dW(s)
  \end{align*}
  for all $t \in [0,T]$ $\P$-almost surely. We are going to pass to
  the limit as $\delta \to 0$ in this identity. The dominated
  convergence theorem immediately implies that, $\P$-a.s.,
  \begin{align*}
  \norm[\big]{y^\delta(t)}^2 &\longrightarrow \norm[\big]{y(t)}^2,\\
  \int_0^t \norm[\big]{y^\delta(s)}^2\,ds &\longrightarrow
  \int_0^t \norm[\big]{y(s)}^2\,ds,\\
  \int_0^t \norm[\big]{C^{\delta}(s)}_{\cL^2(H,L^2)}^2\,ds
  &\longrightarrow \int_0^t \norm[\big]{C(s)}_{\cL^2(H,L^2)}^2\,ds
  \end{align*}
  for all $t \in [0,T]$, and $\norm{y^\delta_0}^2 \to \norm{y_0}^2$,
  as $\delta \to 0$.
  Defining the real local martingales
  \[
  M^{\delta} := (y^{\delta} C^{\delta}) \cdot W, \qquad
  M := (yC) \cdot W,
  \]
  we are going to show that
  \[
  \E\sup_{t\leq T} \abs[\big]{M^\delta(t) - M(t)} \longrightarrow 0
  \]
  as $\delta \to 0$. In fact, Davis' inequality for local martingales
  (see, e.g., \cite{cm:expo16}) yields
  \begin{align*}
  \E\sup_{t\leq T} \abs[\big]{M^\delta(t) - M(t)} &\lesssim
  \E\bigl[M^\delta-M,M^\delta-M\bigr]_T^{1/2}\\
  &=\E\biggl(\int_0^T%
    \norm[\big]{y^\delta(t) C^\delta(t) - y(t)C(t)}^2_{\cL^2(H,\erre)}\,dt
    \biggr)^{1/2},
  \end{align*}
  and one has, identifying $\cL^2(H,\erre)$ with $H$ and recalling
  that $(I-\delta\Delta)^{-m}$ is contractive in $L^2$,
  \begin{align*}
  \norm[\big]{y^\delta C^\delta - yC}_H &\leq 
  \norm[\big]{y^\delta C^\delta - y^\delta C}_H 
  + \norm[\big]{y^\delta C - y C}_H\\
  &\leq \Bigl( \sup_{t\leq T} \norm{y(t)} \Bigr)
        \norm[\big]{C^\delta-C}_{\cL^2(H,L^2)}
  + \norm[\big]{y^\delta C - y C}_H,
  \end{align*}
  so that
  \begin{align*}
  &\E\biggl(\int_0^T%
    \norm[\big]{y^\delta(t) C^\delta(t) - y(t)C(t)}^2_H\,dt\biggr)^{1/2}\\
  &\hspace{3em} \lesssim \E \sup_{t\leq T} \norm{y(t)}
    \biggl( \int_0^T \norm[\big]{C^\delta(t)-C(t)}^2_{\cL^2(H,L^2)}\,dt
    \biggr)^{1/2}\\
  &\hspace{3em}\quad + \E\biggl( \int_0^T
    \norm[\big]{(y^\delta(t)-y(t))C(t)}^2_H\,dt \biggr)^{1/2}.
  \end{align*}
  It follows by the Cauchy-Schwarz inequality that the first term on
  the right-hand side is dominated by
  \[
  \Bigl( \E \sup_{t\leq T} \norm{y(t)}^2 \Bigr)^{1/2} 
  \biggl( \E \int_0^T
  \norm[\big]{C^\delta(t)-C(t)}^2_{\cL^2(H,L^2)}\,dt \biggr)^{1/2},
  \]
  which converges to zero by properties of Hilbert-Schmidt operators
  and the dominated convergence theorem. Moreover,
  \[
  \norm[\big]{(y^\delta(t)-y(t))C(t)}^2_H \lesssim 
  \norm[\big]{y(t)}^2 \norm[\big]{C(t)}^2_{\cL^2(H,L^2)}
  \]
  and $y \in L^\infty_t L^2_x$, $C \in L^2_t\cL(H,L^2_x)$ $\P$-a.s.
  imply, by dominated convergence, that
  \[
  \int_0^T \norm[\big]{(y^\delta(t)-y(t))C(t)}^2_H\,dt \longrightarrow 0
  \]
  $\P$-a.s. as $\delta \to 0$. Since
  \[
  \biggl( \int_0^T \norm[\big]{(y^\delta(t)-y(t))C(t)}^2_H\,dt
  \biggr)^{1/2} \lesssim \sup_{t\leq T} \norm{y(t)}
  \biggl( \int_0^T \norm[\big]{C(t)}^2_{\cL^2(H,L^2)}\,dt \biggr)^{1/2}
  \]
  and, by the Cauchy-Schwarz inequality,
  \begin{align*}
  &\E\sup_{t\leq T} \norm{y(t)}
  \biggl( \int_0^T \norm[\big]{C(t)}^2_{\cL^2(H,L^2)}\,dt \biggr)^{1/2}\\
  &\hspace{3em} \leq \Bigl( \E\sup_{t\leq T} \norm{y(t)}^2 \Bigr)^{1/2}
  \biggl( \E\int_0^T \norm[\big]{C(t)}^2_{\cL^2(H,L^2)}\,dt \biggr)^{1/2}
  < \infty,
  \end{align*}
  again by dominated convergence it follows that
  \[
  \E \biggl( \int_0^T \norm[\big]{(y^\delta(t)-y(t))C(t)}^2_H\,dt
  \biggr)^{1/2} \longrightarrow 0
  \]
  as $\delta \to 0$. We have thus shown that
  $\E\sup_{t\leq T} \abs[\big]{M^\delta(t)-M(t)} \to $ as
  $\delta \to 0$, hence, in particular, that
  \[
  \int_0^t y^\delta(s)C^\delta(s)\,dW(s) \longrightarrow 
  \int_0^t y(s)C(s)\,dW(s)
  \]
  in probability as $\delta \to 0$ for all $t \in [0,T]$.

  To complete the proof, we are going to show that
  $\nabla Y^{\delta}\cdot \zeta^{\delta} \to \nabla Y\cdot \zeta$ in
  $\bL^1L^1_{t,x}$, which readily implies that
  \[
  \int_0^t\!\!\int_D \nabla y^{\delta}(s,x) \cdot \zeta^{\delta}(s,x) \,dx\,ds
  \longrightarrow
  \int_0^t\!\!\int_D \nabla y(s,x) \cdot \zeta(s,x) \,dx\,ds
  \]
  in probability for all $t \in [0,T]$. Since
  $\nabla y^{\delta} \to \nabla y$ and $\zeta^{\delta} \to \zeta$ in
  measure in $\Omega \times (0,T) \times D$, in view of Vitali's
  theorem, it suffices to prove that the sequence
  $(\nabla y^{\delta}\cdot \zeta^{\delta})$ is uniformly integrable in
  $\Omega \times (0,T) \times D$. One has
  \begin{align*}
  c^2 \bigl( \nabla y^\delta \cdot \zeta^\delta \bigr) &\leq
  k\bigl( c\nabla y^\delta \bigr) + k^*\bigl( c\zeta^\delta \bigr),\\
  -c^2 \bigl( \nabla y^\delta \cdot \zeta^\delta \bigr) &\leq
  k\bigl( c(-\nabla y^\delta) \bigr) + k^*\bigl( c\zeta^\delta \bigr)
  \end{align*}
  hence
  \begin{align*}
  c^2 \abs[\big]{\nabla y^\delta \cdot \zeta^\delta} &\lesssim
  k\bigl( c\nabla y^\delta \bigr) + k\bigl( c(-\nabla y^\delta) \bigr)
  + k^*\bigl( c\zeta^\delta \bigr)\\
  &\lesssim 1 + k\bigl( c\nabla y^\delta \bigr)
  + k^*\bigl( c\zeta^\delta \bigr),
  \end{align*}
  where the second inequality follows by the hypothesis
  $\limsup_{\abs{x}\to\infty} k(-x)/k(x)<\infty$.  By Jensen's
  inequality for sub-Markovian operators (see
  \cite[Theorem~3.4]{Haa07}) we also have
  \begin{align*}
  k\bigl( c\nabla y^\delta \bigr) 
  &= k\bigl( (I-\delta\Delta)^{-m} c\nabla y \bigr)
  \leq (I-\delta\Delta)^{-m} k\bigl( c\nabla y \bigr),\\
  k^*\bigl( c\zeta^\delta \bigr)
  &=k^*\bigl( (I-\delta\Delta)^{-m} c\zeta \bigr)
  \leq (I-\delta\Delta)^{-m}   k^*\bigl( c\zeta \bigr),
  \end{align*}
  hence
  \[
  c^2 \abs[\big]{\nabla y^\delta \cdot \zeta^\delta} \lesssim
  1 + (I-\delta\Delta)^{-m} \bigl( k(c\nabla y) + k^*(c\zeta) \bigr),
  \]
  where the right-hand side is uniformly integrable because it converges
  in $\bL^1L^1_{t,x}$ as $\delta \to 0$. This yields that
  $(\nabla y^\delta \cdot \zeta^\delta)$ is uniformly integrable as
  well, thus concluding the proof.
\end{proof}


\ifbozza\newpage\else\fi
\section{Well-posedness for an auxiliary SPDE}
\label{sec:reg}
Let $V_0$ be a separable Hilbert space, densely and continuously
embedded\footnote{Continuous embedding of a Banach space $E$ in a
  Banach space $F$ will be denoted by $E \embed F$.} in $H^1_0$, and
continuously embedded in $W^{1,\infty}$.  The Sobolev embedding
theorem easily implies that such a space exists indeed.

We are going to prove that the auxiliary equation
\begin{equation}
\label{eq:V0}
du(t) - \div \gamma(\nabla u(t))\,dt = G(t)\,dW(t),
\qquad u(0)=u_0,
\end{equation}
where $G$ is an $\cL^2(U,V_0)$-valued process, is well posed.

\begin{prop}
  \label{prop:V0}
  Assume that $u_0 \in \bL^2(L^2)$ is $\cF_0$-measurable and that
  $G:\Omega \times [0,T] \to \cL^2(U,V_0)$ is measurable and adapted, with
  \[
  \E\int_0^T \norm[\big]{G(t)}^2_{\cL^2(U,V_0)}\,dt < \infty.
  \]
  Then equation \eqref{eq:V0} admits a unique strong solution
  $u$ such that
  \begin{align*}
    &\E\sup_{t\leq T} \norm{u(t)}^2 
      + \E\int_0^T \norm[\big]{u(t)}_{W^{1,1}_0}\,dt < \infty,\\
    &\E\int_0^T \norm[\big]{\gamma(\nabla u(t))}_{L^1} \,dt < \infty,\\
    & \int_0^T \bigl( \norm[\big]{k(\nabla u(t))}_{L^1} 
      + \norm[\big]{k^*(\gamma(\nabla u(t)))}_{L^1} \,dt \bigr) 
      < \infty \quad \text{$\P$-almost surely}.
  \end{align*}
  Moreover, the paths of $u$ are $\P$-a.s. weakly continuous with
  values in $L^2$.
\end{prop}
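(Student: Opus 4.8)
The plan is to construct $u$ as a weak limit of solutions to doubly regularized equations, and to identify the limit by combining energy identities with the It\^o formula of Proposition~\ref{prop:ito}. For $\epsilon,\lambda>0$ let $\gamma_\lambda$ denote the Yosida approximation of $\gamma$ (a monotone $\tfrac1\lambda$-Lipschitz function with $\gamma_\lambda(0)=0$) and let $\Delta$ be the Dirichlet Laplacian on $D$. The regularized equation
\[
du_{\lambda\epsilon} - \div\bigl(\epsilon\nabla u_{\lambda\epsilon} + \gamma_\lambda(\nabla u_{\lambda\epsilon})\bigr)\,dt = G\,dW,
\qquad u_{\lambda\epsilon}(0)=u_0,
\]
falls within the classical variational theory for monotone SPDEs (see \cite{KR-spde,Pard}) in the Gelfand triple $H^1_0\embed L^2\embed H^{-1}$: its drift operator is monotone, hemicontinuous (by continuity of $\gamma_\lambda$), coercive (its pairing with $u$ dominating $\epsilon\norm{\nabla u}_{L^2}^2$, thanks to $\gamma_\lambda(0)=0$ and monotonicity), and of linear growth (as $\abs{\gamma_\lambda(r)}\le\tfrac1\lambda\abs{r}$), while $G\in\bL^2 L^2_t\cL^2(U,L^2)$ since $V_0\embed L^2$. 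Hence there is a unique variational solution $u_{\lambda\epsilon}\in L^2(\Omega;C([0,T];L^2))\cap\bL^2 L^2_t H^1_0$, for which It\^o's formula for $\norm{u_{\lambda\epsilon}}^2$ holds.

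First I would obtain a priori estimates uniform in $(\lambda,\epsilon)$. Writing It\^o's formula for $\norm{u_{\lambda\epsilon}(t)}^2$, integrating by parts, and using Fenchel's identity $\gamma_\lambda(r)\cdot r=k_\lambda(r)+k_\lambda^*(\gamma_\lambda(r))$ for the Moreau envelope $k_\lambda$ of $k$ (recall $k_\lambda\le k$, $k_\lambda\uparrow k$ as $\lambda\downarrow 0$, $k_\lambda^*=k^*+\tfrac\lambda2\abs{\cdot}^2\ge k^*$, and $k_\lambda\ge k_1$ for $\lambda\le 1$), one gets
\[
\tfrac12\norm{u_{\lambda\epsilon}(t)}^2 + \epsilon\!\int_0^t\!\norm{\nabla u_{\lambda\epsilon}}_{L^2}^2 + \int_0^t\!\!\int_D\bigl(k_\lambda(\nabla u_{\lambda\epsilon})+k_\lambda^*(\gamma_\lambda(\nabla u_{\lambda\epsilon}))\bigr) = \tfrac12\norm{u_0}^2 + \tfrac12\!\int_0^t\!\norm{G}^2_{\cL^2(U,L^2)} + \int_0^t\! u_{\lambda\epsilon}\,G\,dW.
\]
Taking suprema in $t$ and applying the Burkholder--Davis--Gundy inequality (absorbing the martingale term into the left-hand side via Young's inequality) yields bounds, uniform in $(\lambda,\epsilon)$, for $\E\sup_{t\le T}\norm{u_{\lambda\epsilon}}^2$, $\epsilon\E\int_0^T\norm{\nabla u_{\lambda\epsilon}}_{L^2}^2$ and $\E\int_0^T\int_D\bigl(k_\lambda(\nabla u_{\lambda\epsilon})+k^*(\gamma_\lambda(\nabla u_{\lambda\epsilon}))\bigr)$. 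Since $k_1$ is convex, nonnegative and superlinear (its conjugate $k^*+\tfrac12\abs{\cdot}^2$ being finite everywhere because $k$ is superlinear) and $k^*$ is superlinear, the de la Vall\'ee Poussin criterion shows $\{\nabla u_{\lambda\epsilon}\}$ and $\{\gamma_\lambda(\nabla u_{\lambda\epsilon})\}$ to be uniformly integrable, hence relatively weakly compact in $\bL^1 L^1_{t,x}$; also $\epsilon\Delta u_{\lambda\epsilon}\to 0$ in $\bL^2 L^2_t H^{-1}$. Along a sequence $(\lambda_n,\epsilon_n)\to(0,0)$ one extracts $u_{\lambda\epsilon}\to u$ weakly-$*$ in $\bL^2 L^\infty_t L^2_x$, $\nabla u_{\lambda\epsilon}\rightharpoonup\nabla u$ and $\gamma_\lambda(\nabla u_{\lambda\epsilon})\rightharpoonup\chi$ in $\bL^1 L^1_{t,x}$; the process $u$ is then measurable and adapted with $u\in\bL^1 L^1_t W^{1,1}_0$, and, by weak lower semicontinuity of convex integral functionals together with the monotone convergence $k_\mu\uparrow k$, $\E\int_0^T\int_D\bigl(k(\nabla u)+k^*(\chi)\bigr)<\infty$. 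Testing the regularized equation against smooth functions and passing to the limit (the term $\epsilon\Delta u_{\lambda\epsilon}$ vanishing, the weak $L^1$ convergence of $\gamma_\lambda(\nabla u_{\lambda\epsilon})$ taking care of the divergence term, the stochastic integral being unchanged), then choosing the weakly continuous $L^2$-valued modification, gives $u(t)-\int_0^t\div\chi(s)\,ds=u_0+\int_0^t G(s)\,dW(s)$ for all $t\in[0,T]$, $\P$-a.s.\ (the divergence term being in $L^2$ by difference), together with weak continuity of the paths in $L^2$.

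The crux of the argument --- and the step I expect to be the main obstacle --- is the identification $\chi=\gamma(\nabla u)$: only weak $L^1$ convergence is available, so one cannot pass to the limit in the nonlinearity directly, and the argument must proceed through energy balances. Proposition~\ref{prop:ito} applies to $u$ (with $\alpha=0$, its integrability hypothesis having just been checked, with $c=1$), yielding, at $t=T$ and in expectation,
\[
\tfrac12\E\norm{u(T)}^2 + \E\!\int_0^T\!\!\int_D\chi\cdot\nabla u = \tfrac12\E\norm{u_0}^2 + \tfrac12\E\!\int_0^T\!\norm{G}^2_{\cL^2(U,L^2)}.
\]
Comparing with the above It\^o identity for $u_{\lambda\epsilon}$ at $t=T$ (in expectation, the martingale term dropping out) and using the weak lower semicontinuity of the $L^2(\Omega;L^2)$-norm, $\E\norm{u(T)}^2\le\liminf_{\lambda,\epsilon}\E\norm{u_{\lambda\epsilon}(T)}^2$, one deduces $\limsup_{\lambda,\epsilon}\E\int_0^T\int_D\gamma_\lambda(\nabla u_{\lambda\epsilon})\cdot\nabla u_{\lambda\epsilon}\le\E\int_0^T\int_D\chi\cdot\nabla u$. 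Conversely, from $\gamma_\lambda(r)\cdot r=k_\lambda(r)+k_\lambda^*(\gamma_\lambda(r))\ge k_\lambda(r)+k^*(\gamma_\lambda(r))$, the weak $L^1$ lower semicontinuity of $v\mapsto\E\int_0^T\int_D k_\mu(v)$ and $v\mapsto\E\int_0^T\int_D k^*(v)$, the bound $k_\lambda\ge k_\mu$ for $\lambda\le\mu$, and monotone convergence, one gets $\liminf_{\lambda,\epsilon}\E\int_0^T\int_D\gamma_\lambda(\nabla u_{\lambda\epsilon})\cdot\nabla u_{\lambda\epsilon}\ge\E\int_0^T\int_D\bigl(k(\nabla u)+k^*(\chi)\bigr)$. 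Since Young's inequality gives $\chi\cdot\nabla u\le k(\nabla u)+k^*(\chi)$ pointwise, the two chains force $k(\nabla u)+k^*(\chi)=\chi\cdot\nabla u$ a.e.\ on $\Omega\times(0,T)\times D$, which is exactly Fenchel's equality characterizing $\chi=\partial k(\nabla u)=\gamma(\nabla u)$; this also yields $\E\int_0^T\norm{\gamma(\nabla u)}_{L^1}<\infty$ and, a fortiori, the $\P$-a.s.\ bounds on $\norm{k(\nabla u)}_{L^1}+\norm{k^*(\gamma(\nabla u))}_{L^1}$ in the statement.

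Finally, for uniqueness let $u^1,u^2$ be two solutions in the asserted class and $w:=u^1-u^2$, solving $w(t)-\int_0^t\div\bigl(\gamma(\nabla u^1)-\gamma(\nabla u^2)\bigr)\,ds=0$. Choosing stopping times $\tau_N$ that make $\norm{u^i(\cdot)}^2$ and $\int_0^\cdot\int_D\bigl(k(\nabla u^i)+k^*(\gamma(\nabla u^i))\bigr)$ bounded by $N$, the stopped process $w^{\tau_N}$ satisfies the hypotheses of Proposition~\ref{prop:ito} for a sufficiently small $c>0$ --- using $k(0)=k^*(0)=0$ and convexity (whence $k(sr)\le s\,k(r)$ and $k^*(sr)\le s\,k^*(r)$ for $s\in[0,1]$), together with the assumption $\limsup_{\abs{x}\to\infty}k(-x)/k(x)<\infty$ and the analogous property it entails for $k^*$ --- so that Proposition~\ref{prop:ito} gives
\[
\tfrac12\norm{w(t\wedge\tau_N)}^2 + \int_0^{t\wedge\tau_N}\!\!\int_D\bigl(\gamma(\nabla u^1)-\gamma(\nabla u^2)\bigr)\cdot\bigl(\nabla u^1-\nabla u^2\bigr) = 0.
\]
The integral is nonnegative by monotonicity of $\gamma$, so $w(\cdot\wedge\tau_N)\equiv0$; letting $N\to\infty$ (so that $\tau_N\uparrow T$ $\P$-a.s., since the $u^i$ lie in the solution class) gives $w\equiv0$, establishing uniqueness.
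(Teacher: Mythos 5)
Your existence argument is essentially correct but takes a genuinely different route from the paper's. The paper works \emph{pathwise}: for fixed $\omega$ it derives pathwise a priori estimates (Lemmas~\ref{lm:stp} and~\ref{lm:L1t}), extracts weak $L^1_{t,x}$ limits, and identifies $\eta=\gamma(\nabla u)$ via a deterministic energy identity (Lemma~\ref{lm:testing}); this is exactly where the hypothesis $G\in\cL^2(U,V_0)$ with $V_0\embed W^{1,\infty}$ is used, since it guarantees $\nabla(G\cdot W)\in L^\infty_{t,x}$ and hence $k(\nabla(G\cdot W))\in L^1_{t,x}$ for a.e.\ fixed path. You instead run the whole Minty argument \emph{in expectation} on $\Omega\times(0,T)\times D$, applying Proposition~\ref{prop:ito} to the limit equation and concluding via the Fenchel equality $k(\nabla u)+k^*(\chi)=\chi\cdot\nabla u$; your argument never uses the $V_0$-regularity of $G$ and, if completed, would prove Proposition~\ref{prop:add} directly. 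Two points are compressed and must be supplied: (i) the lower semicontinuity step $\E\norm{u(T)}^2\le\liminf\E\norm{u_{\lambda\epsilon}(T)}^2$ needs $u_{\lambda\epsilon}(T)\rightharpoonup u(T)$ in $\bL^2L^2_x$ at the \emph{fixed} time $T$, which does not follow from weak-$*$ convergence in $\bL^2L^\infty_tL^2_x$ (evaluation at a point is not continuous for that topology) and must be obtained by passing to the limit in the equation at $t=T$ against test functions $\xi\otimes\phi_0$ with $\xi\in\bL^\infty$, $\phi_0\in W^{1,\infty}_0$; (ii) adaptedness of $u$ and $\chi$ requires a Mazur-type argument (or weak closedness of the space of adapted processes), which you assert but do not perform. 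Both are fixable.

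The uniqueness argument, however, has a genuine gap. Proposition~\ref{prop:ito} requires $\E\int_0^T\!\int_D(k(c\nabla y)+k^*(c\zeta))<\infty$ \emph{in expectation}, whereas the solution class of the proposition only provides the \emph{pathwise} bound $\int_0^T(\norm{k(\nabla u^i)}_{L^1}+\norm{k^*(\gamma(\nabla u^i))}_{L^1})<\infty$ $\P$-a.s. Your stopping times do not repair this: the time-stopped process $w(\cdot\wedge\tau_N)$ is constant equal to $w(\tau_N)$ on $(\tau_N,T]$, and $w(\tau_N)$ is a single time-slice of a function known only to lie in $L^1_tW^{1,1}_0$ (defined at time $\tau_N$ only through the weakly $L^2$-continuous representative), so it need not belong to $W^{1,1}_0$ and $k(c\nabla w(\cdot\wedge\tau_N))$ is not controlled after $\tau_N$; moreover Proposition~\ref{prop:ito} is stated for equations on $[0,T]$, not for equations driven up to a stopping time, so it cannot be invoked as is. The correct observation --- and the one the paper exploits --- is that for two solutions with the \emph{same additive noise} the stochastic integrals cancel, so $w=u^1-u^2$ satisfies a purely deterministic equation for each fixed $\omega$; one then proves and applies a pathwise energy identity (the paper's Lemma~\ref{lm:testing} with $f=0$, obtained by the same mollification and uniform-integrability argument carried out $\omega$ by $\omega$), which requires only the pathwise integrability that the solution class actually provides. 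Your computation showing $k(c\nabla w)+k^*(c\zeta)\lesssim 1+\sum_i\bigl(k(\nabla u^i)+k^*(\gamma(\nabla u^i))\bigr)$ for small $c$ is correct and is exactly what makes that pathwise identity applicable; it is the passage through the stochastic, in-expectation It\^o formula that should be dropped.
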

The assumptions of Proposition \ref{prop:V0} are (tacitly) assumed to
hold throughout the section.

\medskip

Let $\gamma_\lambda: \erre^n \to \erre^n$, $\lambda>0$, be the Yosida
regularization of $\gamma$, i.e.
\[
\gamma_\lambda := \frac{1}{\lambda}\bigl( I - (I+\lambda\gamma)^{-1} \bigr),
\qquad \lambda>0,
\]
and consider the regularized equation
\[
  du_\lambda(t) - \div \gamma_\lambda (\nabla u_\lambda(t))\,dt -
  \lambda\Delta u_\lambda(t)\,dt = G(t)\,dW(t), \qquad
  u_\lambda(0)=u_0.
\]
Since $\gamma_\lambda$ is monotone and Lipschitz-continuous, it is not
difficult to check that the operator
\[
v \longmapsto - \bigl( \div\gamma_\lambda(\nabla v) + \lambda\Delta v \bigr)
\]
satisfies the conditions of the classical variational approach by
Pardoux, Krylov and Rozovski\u{i} \cite{KR-spde,Pard} on the Gelfand
triple $H^1_0 \embed L^2 \embed H^{-1}$, hence there exists a unique
adapted process $u_\lambda$ with values in $H^1_0$ such that
\[
\E \norm[\big]{u_\lambda}^2_{C_t L^2_x} 
+ \E\int_0^T \norm[\big]{u_\lambda(t)}^2_{H^1_0}\,dt < \infty
\]
and
\begin{equation}
  \label{eq:reg}
  u_\lambda(t) - \int_0^t \div\gamma_\lambda(\nabla u_\lambda(s))\,ds
  - \lambda\int_0^t \Delta u_\lambda(s)\,ds = u_0 + \int_0^t G(s)\,dW(s)  
\end{equation}
in $H^{-1}$ for all $t \in [0,T]$.

\subsection{A priori estimates}
We are now going to establish several a priori estimates for
$u_\lambda$ and related processes, both pathwise and in expectation.

We begin with a simple maximal estimate for stochastic integrals that
will be used several times in the sequel.
\begin{lemma}  \label{lm:DY}
  Let $U$, $H$, $K$ be separable Hilbert spaces. If
  \[
  F: \Omega \times [0,T] \to \cL(H,K), \qquad
  G: \Omega \times [0,T] \to \cL^2(U,H)
  \]
  are measurable and adapted processes such that
  \[
  \E\sup_{t\leq T} \norm[\big]{F(t)}^2_{\cL(H,K)}
  + \E\int_0^T \norm[\big]{G(t)}^2_{\cL^2(U,H)} \,dt < \infty,
  \]
  then, for any $\varepsilon>0$,
  \begin{align*}
  &\E\sup_{t\leq T} \norm[\bigg]{\int_0^t F(s)G(s)\,dW(s)}_K\\
  &\hspace{3em} \leq \varepsilon
  \E\sup_{t\leq T} \norm[\big]{F(t)}^2_{\cL(H,K)}
  + N(\varepsilon) \E\int_0^T \norm[\big]{G(t)}^2_{\cL^2(U,H)}\,dt.
  \end{align*}
\end{lemma}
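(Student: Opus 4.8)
The plan is to derive the estimate from the Burkholder--Davis--Gundy (Davis') inequality for Hilbert-space-valued \emph{local} martingales, followed by Young's inequality. First I would check that $M(t):=\int_0^t F(s)G(s)\,dW(s)$ is a well-defined $K$-valued local martingale. Pointwise in $(\omega,t)$, the composition $F(t)G(t)$ belongs to $\cL^2(U,K)$ with
\[
\norm[\big]{F(t)G(t)}_{\cL^2(U,K)} \leq \norm[\big]{F(t)}_{\cL(H,K)}\,\norm[\big]{G(t)}_{\cL^2(U,H)},
\]
so that, $\P$-a.s., $FG \in L^2(0,T;\cL^2(U,K))$ thanks to the integrability hypotheses on $F$ and $G$; this is exactly what is needed for the stochastic integral to define a local martingale. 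Note that, under the stated assumptions, $M$ need not be a genuine martingale (the product $FG$ is not assumed square-integrable over $\Omega \times [0,T]$), which is precisely why the local-martingale form of the maximal inequality, as in \cite{cm:expo16}, is the appropriate tool.

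Next, Davis' inequality together with the operator-norm bound above gives
\[
\E\sup_{t\leq T}\norm[\big]{M(t)}_K \lesssim \E\bigl[M,M\bigr]_T^{1/2}
= \E\biggl(\int_0^T\norm[\big]{F(s)G(s)}_{\cL^2(U,K)}^2\,ds\biggr)^{1/2}
\leq \E\biggl(\int_0^T\norm[\big]{F(s)}_{\cL(H,K)}^2\,\norm[\big]{G(s)}_{\cL^2(U,H)}^2\,ds\biggr)^{1/2}.
\]
I would then estimate $\norm{F(s)}_{\cL(H,K)}$ under the integral by its supremum over $s\leq T$, pull that supremum out of the time integral, and apply Young's inequality $ab\leq\frac{\eta}{2}a^2+\frac{1}{2\eta}b^2$ with $a=\sup_{t\leq T}\norm{F(t)}_{\cL(H,K)}$ and $b=\bigl(\int_0^T\norm{G(s)}_{\cL^2(U,H)}^2\,ds\bigr)^{1/2}$, obtaining
\[
\E\sup_{t\leq T}\norm[\big]{M(t)}_K \leq \kappa\biggl(\frac{\eta}{2}\,\E\sup_{t\leq T}\norm[\big]{F(t)}_{\cL(H,K)}^2 + \frac{1}{2\eta}\,\E\int_0^T\norm[\big]{G(t)}_{\cL^2(U,H)}^2\,dt\biggr),
\]
with $\kappa$ the universal constant from Davis' inequality. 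Choosing $\eta:=2\varepsilon/\kappa$ then yields the assertion with $N(\varepsilon)=\kappa^2/(4\varepsilon)$.

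The only step that deserves a little care is the first one: verifying that $M$ is indeed a local martingale and that Davis' inequality applies at this level of integrability (if one wants to be fully explicit, by introducing a localizing sequence of stopping times, applying the inequality on each stochastic interval, and passing to the limit via Fatou's lemma). Everything after that reduces to Young's inequality, so I do not anticipate any genuine difficulty.
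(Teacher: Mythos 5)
Your proposal is correct and follows essentially the same route as the paper: the ideal property of Hilbert--Schmidt operators to bound $\norm{FG}_{\cL^2(U,K)}$ by $\norm{F}_{\cL(H,K)}\norm{G}_{\cL^2(U,H)}$, the a.s.\ finiteness of the quadratic variation so that $(FG)\cdot W$ is a local martingale, Davis' inequality, and then the elementary Young-type inequality with a suitable choice of the parameter. The extra care you flag about localization is a reasonable (and harmless) addition, though the version of Davis' inequality cited in the paper already applies directly to local martingales.
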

\begin{proof}
  By the ideal property of Hilbert-Schmidt operators (see, e.g.,
  \cite[p.~V.52]{Bbk:EVT}), one has
  \begin{align*}
  \norm[\big]{F(s)G(s)}_{\cL^2(U,K)} &\leq
  \norm[\big]{F(s)}_{\cL(H,K)} \norm[\big]{G(s)}_{\cL^2(U,H)}\\
  &\leq \sup_{s\leq T} \norm[\big]{F(s)}_{\cL(H,K)} \norm[\big]{G(s)}_{\cL^2(U,H)}
  \end{align*}
  for all $s \in [0,T]$, hence
  \[
  \int_0^T \norm[\big]{F(s)G(s)}^2_{\cL^2(U,K)}\,ds \leq
  \sup_{s\leq T} \norm[\big]{F(s)}^2_{\cL(H,K)} 
  \int_0^T \norm[\big]{G(s)}^2_{\cL^2(U,H)}\,ds,
  \]
  where the right-hand side is finite $\P$-a.s. thanks to the
  assumptions on $F$ and $G$. Then $(FG) \cdot W$ is a $K$-valued
  local martingale, for which Davis' inequality yields
  \begin{align*}
  \E\sup_{t\leq T} \norm[\bigg]{\int_0^t F(s)G(s)\,dW(s)}_K
  &\lesssim 
  \E\bigl[(FG)\cdot W,(FG)\cdot W\bigr]_T^{1/2}\\
  &= \E\biggl( \int_0^T 
  \norm[\big]{F(s)G(s)}^2_{\cL^2(U,K)}\,ds \biggr)^{1/2}\\
  &\leq \E \sup_{s\leq T} \norm[\big]{F}_{\cL(H,K)}
  \biggl( \int_0^T \norm[\big]{G(s)}^2_{\cL^2(U,H)}\,ds \biggr)^{1/2}.
  \end{align*}
  The proof is finished invoking the elementary inequality
  \[
  ab \leq \frac12 \bigl( \varepsilon a^2 + \frac{1}{\varepsilon} b^2\bigr)
  \qquad \forall a, b \in \erre, \; \varepsilon>0,
  \]
  and choosing $\varepsilon$ properly.
\end{proof}
\noindent The estimate in the previous lemma will be used only in the
case $K=\erre$. The more general proof we have given is not more
complicated than in the simpler case actually needed.

\begin{lemma}  \label{lm:aspetta}
  There exists a constant $N$ such that
  \begin{align*}
    &\norm[\big]{u_\lambda}_{\bL^2 C_t L^2_x} + 
    \lambda^{1/2} \norm[\big]{\nabla u_\lambda}_{\bL^2 L^2_{t,x}} +
    \norm[\big]{\gamma_\lambda(\nabla u_\lambda)%
      \cdot \nabla u_\lambda}_{\bL^1 L^1_{t,x}}\\
    &\hspace{3em} < N \Bigl( \norm[\big]{u_0}_{\bL^2 L^2_x} 
      + \norm[\big]{G}_{\bL^2 L^2_t \cL^2(H,L^2_x)} \Bigr).
  \end{align*}
\end{lemma}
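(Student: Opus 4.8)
The plan is to run a standard energy estimate for the regularized equation \eqref{eq:reg}, keeping track of the fact that the resulting constant must be \emph{independent} of $\lambda$. Since the variational theory quoted just above provides a solution $u_\lambda$ with $u_\lambda \in \bL^2 C_t L^2_x \cap \bL^2 L^2_t H^1_0$, and since the drift $v \mapsto -\bigl(\div\gamma_\lambda(\nabla v) + \lambda\Delta v\bigr)$ sends $H^1_0$ continuously into $H^{-1}$, the classical It\^o formula for the square of the norm on the Gelfand triple $H^1_0 \embed L^2 \embed H^{-1}$ (see \cite{KR-spde,Pard}) applies to \eqref{eq:reg}. Integrating by parts the duality pairing $\ip{-\div\gamma_\lambda(\nabla u_\lambda) - \lambda\Delta u_\lambda}{u_\lambda}$ thereby produced — which is licit because $u_\lambda(s) \in H^1_0$ — one gets, for every $t \in [0,T]$, $\P$-almost surely,
\begin{align*}
  \frac12\norm{u_\lambda(t)}^2
  &+ \lambda\int_0^t\norm{\nabla u_\lambda(s)}^2\,ds
  + \int_0^t\!\!\int_D \gamma_\lambda(\nabla u_\lambda(s,x))\cdot\nabla u_\lambda(s,x)\,dx\,ds\\
  &= \frac12\norm{u_0}^2
  + \frac12\int_0^t\norm[\big]{G(s)}^2_{\cL^2(H,L^2)}\,ds
  + \int_0^t u_\lambda(s)\,G(s)\,dW(s).
\end{align*}
Both terms on the first line are nonnegative: the one carrying $\lambda$ trivially, and the $\gamma_\lambda$-term because $\gamma_\lambda$ is monotone with $\gamma_\lambda(0)=0$ (as $\gamma(0)=\nabla k(0)=0$, $k$ attaining its minimum at the origin).

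I would then take the supremum over $t \in [0,T]$ and expectations. The supremum of the stochastic integral is handled by Lemma~\ref{lm:DY} with $K=\erre$ and with $F(s)$ the bounded operator $v \mapsto \ip{u_\lambda(s)}{v}$ on $L^2$, of norm $\norm{u_\lambda(s)}$; its hypotheses hold since $u_\lambda$ is adapted with $\E\sup_{t\le T}\norm{u_\lambda(t)}^2 < \infty$ (again by the variational theory) and $G$ is square-integrable with values in $\cL^2(U,V_0)\embed\cL^2(U,L^2)$. Hence, for any $\varepsilon>0$,
\[
\E\sup_{t\le T}\abs[\bigg]{\int_0^t u_\lambda(s)\,G(s)\,dW(s)}
\le \varepsilon\,\E\sup_{t\le T}\norm{u_\lambda(t)}^2
+ N(\varepsilon)\,\E\int_0^T\norm[\big]{G(t)}^2_{\cL^2(H,L^2)}\,dt.
\]
Choosing $\varepsilon$ small — which is permissible precisely because $\E\sup_{t\le T}\norm{u_\lambda(t)}^2$ is \emph{a priori} finite, so that the term can genuinely be moved to the left-hand side — one obtains
\[
\E\sup_{t\le T}\norm{u_\lambda(t)}^2
+ \lambda\,\E\int_0^T\norm{\nabla u_\lambda(t)}^2\,dt
+ \E\int_0^T\!\!\int_D \gamma_\lambda(\nabla u_\lambda)\cdot\nabla u_\lambda
\lesssim \E\norm{u_0}^2 + \E\int_0^T\norm[\big]{G(t)}^2_{\cL^2(H,L^2)}\,dt,
\]
with an \emph{absolute} constant, and in particular one not depending on $\lambda$. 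Bounding $\norm{G}_{\cL^2(H,L^2)}$ by $\norm{G}_{\cL^2(U,V_0)}$ up to the norm of the embedding $V_0 \embed L^2$, and taking square roots of the first two summands on the left, yields the three bounds of the statement (the last one coming out controlled directly by $\E\norm{u_0}^2+\E\int\norm{G}^2$, hence by the square of the right-hand side).

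The argument is essentially routine, and I would not expect a genuine obstacle: no quantitative information on $\gamma_\lambda$ or on the size of $\lambda$ is ever used — the two $\lambda$- and $\gamma_\lambda$-dependent contributions are simply discarded by virtue of their sign — which is exactly why the bounding constant is uniform in $\lambda$, as is needed for the later passage to the limit. The one point demanding a little care is the absorption of the maximal stochastic-integral estimate into the left-hand side: it is legitimate only because the variational existence theorem already guarantees, \emph{qualitatively}, that $\E\sup_{t\le T}\norm{u_\lambda(t)}^2<\infty$; without this preliminary finiteness the absorption would be circular.
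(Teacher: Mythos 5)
Your proposal is correct and follows essentially the same route as the paper: It\^o's formula for the square of the $L^2$-norm on the Gelfand triple, followed by taking supremum and expectation, handling the maximal stochastic integral via Lemma~\ref{lm:DY}, and absorbing the $\varepsilon$-term using the a priori finiteness of $\E\sup_{t\le T}\norm{u_\lambda(t)}^2$. Your explicit remark that $\gamma_\lambda(0)=0$ (so that $\gamma_\lambda(x)\cdot x\ge 0$ pointwise, which is what makes the third term a genuine $\bL^1L^1_{t,x}$-norm) is a point the paper leaves implicit under the phrase ``recalling that $\gamma_\lambda$ is monotone.''
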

\begin{proof}
  It\^o's formula yields
  \begin{align*}
  &\norm[\big]{u_\lambda(t)}^2 
  + 2\int_0^t\!\!\int_D \gamma(\nabla u_\lambda(s)) \cdot %
    \nabla u_\lambda(s)\,dx\,ds
  + 2\lambda \int_0^t \norm[\big]{\nabla u_\lambda(s)}^2\,ds\\
  &\hspace{3em} = \norm[\big]{u_0}^2
  + 2\int_0^t u_\lambda(s) G(s)\,dW(s) 
  + \frac12 \int_0^t \norm[\big]{G(s)}^2_{\cL^2(H,L^2)}\,ds,
  \end{align*}
  where $u_\lambda$ in the stochastic integral on the right-hand side
  has to be interpreted as taking values in $\cL(L^2,R) \simeq L^2$. 
  Taking supremum in time and expectation we get
  \begin{align*}
  &\E\norm[\big]{u_\lambda}^2_{C_t L^2_x} 
  + \E\int_0^T\!\!\int_D \gamma_\lambda(\nabla u_\lambda(s))%
     \cdot \nabla u_\lambda(s)\,dx\,ds
  + \lambda\E\norm[\big]{\nabla u_\lambda}^2_{L^2_{t,x}}\\
  &\hspace{5em} \lesssim
  \E\norm[\big]{u_0}^2 + \E\norm[\big]{G}^2_{L^2_t\cL^2(H,L^2))}
  + \E\sup_{t\in[0,T]} \abs[\bigg]{\int_0^t u_\lambda(s) G(s)\,dW(s)},
  \end{align*}
  where, by Lemma \ref{lm:DY},
  \[
  \E\sup_{t\in[0,T]} \abs[\bigg]{\int_0^t u_\lambda(s) G(s)\,dW(s)}
  \leq \varepsilon \E \norm[\big]{u_\lambda}^2_{C_t L^2_x}
  + N(\varepsilon) \E\int_0^T \norm[\big]{G(s)}^2_{\cL^2(H,L^2)}\,ds
  \]
  for any $\varepsilon>0$. The proof is completed choosing
  $\varepsilon$ small enough and recalling that $\gamma_\lambda$ is
  monotone.
\end{proof}

\begin{lemma}
  \label{lm:L1a}
  The families $(\nabla u_\lambda)$ and
  $(\gamma_\lambda(\nabla u_\lambda))$ are relatively weakly compact
  in $\bL^1L^1_{t,x}$.
\end{lemma}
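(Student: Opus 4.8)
The plan is to apply the Dunford–Pettis theorem on the finite measure space $\Omega\times(0,T)\times D$ (equipped with $\P\otimes dt\otimes dx$): since $\bL^1L^1_{t,x}=L^1(\Omega\times(0,T)\times D)$, it is enough to show that each of the two families is bounded and uniformly integrable there. The mechanism producing both properties is the Fenchel–Young inequality together with the link between the Yosida approximation $\gamma_\lambda$ and the Moreau–Yosida regularization $k_\lambda$ of $k$, the a priori bound of Lemma~\ref{lm:aspetta} supplying the uniformity in $\lambda$.

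First I would recall the relevant convex-analytic facts (see, e.g., \cite{lema}): writing $k_\lambda(x)=\inf_{y\in\erre^n}\bigl(\tfrac{1}{2\lambda}\abs{x-y}^2+k(y)\bigr)$, one has $\gamma_\lambda=\nabla k_\lambda$, the function $k_\lambda$ is nonnegative and nonincreasing in $\lambda$, and $(k_\lambda)^*=k^*+\tfrac{\lambda}{2}\abs{\cdot}^2$. Since $\gamma_\lambda(x)\in\partial k_\lambda(x)$, the Fenchel–Young identity gives, for every $x\in\erre^n$,
\[
\gamma_\lambda(x)\cdot x=k_\lambda(x)+(k_\lambda)^*(\gamma_\lambda(x))
=k_\lambda(x)+k^*(\gamma_\lambda(x))+\tfrac{\lambda}{2}\abs{\gamma_\lambda(x)}^2
\geq k_\lambda(x)+k^*(\gamma_\lambda(x)).
\]
Taking $x=\nabla u_\lambda$, integrating over $\Omega\times(0,T)\times D$, and using Lemma~\ref{lm:aspetta} (note $\gamma_\lambda(\nabla u_\lambda)\cdot\nabla u_\lambda\geq0$, as $\gamma_\lambda$ is monotone with $\gamma_\lambda(0)=0$), one obtains a constant $N$ \emph{independent of $\lambda$} with
\[
\E\int_0^T\!\!\int_D k_\lambda(\nabla u_\lambda)
+\E\int_0^T\!\!\int_D k^*(\gamma_\lambda(\nabla u_\lambda))\leq N.
\]
As only $\lambda\downarrow0$ matters, one may restrict to $\lambda\in(0,1]$, on which $k_\lambda\geq k_1$, so that $\E\int_0^T\!\!\int_D k_1(\nabla u_\lambda)\leq N$ uniformly in $\lambda$. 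Both $k_1$ and $k^*$ are nonnegative, convex, and superlinear at infinity: for $k^*$ this is among the standing assumptions, while $k_1$ is superlinear because its conjugate $k^*+\tfrac12\abs{\cdot}^2$ is finite everywhere, $k^*$ itself being finite everywhere by superlinearity of $k$.

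Finally I would conclude by the de la Vallée-Poussin criterion in elementary form: given $M>0$, superlinearity yields $R_M>0$ such that $k_1(y)\geq M\abs{y}$ and $k^*(y)\geq M\abs{y}$ whenever $\abs{y}\geq R_M$, so that the integral of $\abs{\nabla u_\lambda}$ over the set $\{\abs{\nabla u_\lambda}>R_M\}$ is at most $\tfrac1M\,\E\int_0^T\!\!\int_D k_1(\nabla u_\lambda)\leq N/M$, and likewise for $\gamma_\lambda(\nabla u_\lambda)$, both uniformly in $\lambda\in(0,1]$; choosing $M=1$ and adding the mass $R_1\,T\abs{D}$ of the complementary sublevel set gives the uniform $\bL^1L^1_{t,x}$ bound. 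Dunford–Pettis then delivers relative weak compactness of $(\nabla u_\lambda)$ and $(\gamma_\lambda(\nabla u_\lambda))$ in $\bL^1L^1_{t,x}$. I expect the only genuinely delicate point to be the $\lambda$-dependence of the superlinear function $k_\lambda$ controlling $\nabla u_\lambda$; this is exactly what the monotonicity $k_\lambda\geq k_1$ on $(0,1]$ resolves, everything else being routine convex duality and weak-compactness bookkeeping.
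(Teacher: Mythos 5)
Your argument is correct, and its skeleton coincides with the paper's: the Fenchel--Young identity for the Yosida approximation, the uniform bound $\E\int_0^T\!\!\int_D \gamma_\lambda(\nabla u_\lambda)\cdot\nabla u_\lambda < N$ supplied by Lemma~\ref{lm:aspetta}, and then de la Vall\'ee Poussin plus Dunford--Pettis on the finite measure space $\Omega\times(0,T)\times D$. For the family $(\gamma_\lambda(\nabla u_\lambda))$ the two proofs are identical, both resting on $k^*(\gamma_\lambda(x))\le\gamma_\lambda(x)\cdot x$ and the superlinearity of $k^*$. The one genuine divergence is in the treatment of $(\nabla u_\lambda)$. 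The paper writes Fenchel--Young at the resolvent, namely $k((I+\lambda\gamma)^{-1}x)+k^*(\gamma_\lambda(x))=\gamma_\lambda(x)\cdot(I+\lambda\gamma)^{-1}x\le\gamma_\lambda(x)\cdot x$, deduces relative weak compactness of $\bigl((I+\lambda\gamma)^{-1}\nabla u_\lambda\bigr)$ from the superlinearity of $k$ itself, and then recovers $(\nabla u_\lambda)$ through the decomposition $\nabla u_\lambda=(I+\lambda\gamma)^{-1}\nabla u_\lambda+\lambda\gamma_\lambda(\nabla u_\lambda)$. You instead bound $k_\lambda(\nabla u_\lambda)$ directly via $(k_\lambda)^*=k^*+\tfrac{\lambda}{2}\abs{\cdot}^2$ and exploit the monotonicity $k_\lambda\ge k_1$ for $\lambda\in(0,1]$, together with the superlinearity of $k_1$, which you justify correctly through the finiteness of its conjugate. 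Both routes are sound; yours trades the algebraic splitting of $\nabla u_\lambda$ for a little extra convex analysis on the Moreau--Yosida family, and it has the mild aesthetic advantage of invoking the de la Vall\'ee Poussin criterion with a single $\lambda$-independent superlinear function. The restriction to $\lambda\in(0,1]$ that your argument requires is harmless (the paper itself adopts it shortly afterwards, and only the regime $\lambda\to0$ is ever used), and the remaining details, including $\gamma_\lambda(\nabla u_\lambda)\cdot\nabla u_\lambda\ge0$ and the elementary truncation yielding the uniform $\bL^1L^1_{t,x}$ bound, are all in order.
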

\begin{proof}
  Recall that, for any $y$, $r \in \erre^n$, ones has
  $k(y)+k^*(r)=r\cdot y$ if and only if
  $r \in \partial k(y)=\gamma(y)$. Therefore, since
  \[
  \gamma_\lambda(x) \in \partial k\bigl((I+\lambda\gamma)^{-1}x\bigr)
  = \gamma\bigl((I+\lambda\gamma)^{-1}x\bigr) \qquad \forall x \in \erre^n,
  \]
  we deduce, by the definition of $\gamma_\lambda$, that
  \begin{align}
  \nonumber
  k\bigl((I+\lambda\gamma)^{-1}x\bigr) + k^*\bigl(\gamma_\lambda(x)\bigr)
  &= \gamma_\lambda(x)\cdot(I+\lambda\gamma)^{-1}x \\
  \label{eq:pluto}
  &=\gamma_\lambda(x)\cdot x-\lambda\abs[\big]{\gamma_\lambda(x)}^2
  \leq \gamma_\lambda(x)\cdot x
  \qquad \forall x \in \erre^n.
  \end{align}
  By Lemma \ref{lm:aspetta} we infer that there exists a
  constant $N$, independent of $\lambda$, such that
  \[
  \E\int_0^T\!\!\int_D k^*\bigl(\gamma_\lambda(\nabla u_\lambda)\bigr)
  \leq \E\int_0^T\!\!\int_D \gamma_\lambda(\nabla u_\lambda)\cdot \nabla u_\lambda
  < N.
  \]
  Since $k^*$ is superlinear at infinity, the family
  $(\gamma_\lambda(\nabla u_\lambda))$ is uniformly integrable on
  $\Omega \times (0,T) \times D$ by the de la Vall\'ee Poussin
  criterion (see the appendix), hence relatively weakly
  compact in $\bL^1L^1_{t,x}$ by a well-known theorem of Dunford and
  Pettis.

  Similarly, Lemma \ref{lm:aspetta} and \eqref{eq:pluto} imply that
  there exists a constant $N$, independent of $\lambda$, such that
  \[
  \E\int_0^T\!\!\int_D k\bigl((I+\lambda\gamma)^{-1}\nabla u_\lambda\bigr)
  \leq \E\int_0^T\!\!\int_D \gamma_\lambda(\nabla u_\lambda) \cdot%
    \nabla u_\lambda < N.
  \]
  Since $k$ is superlinear at infinity, the criteria by de la
  Vall\'ee Poussin and Dunford-Pettis imply that the sequence
  $(I+\lambda\gamma)^{-1}\nabla u_\lambda$ is uniformly integrable on
  $\Omega \times (0,T) \times D$, hence relatively weakly compact in
  $\bL^1 L^1_{t,x}$. Moreover, since
  \[
  \nabla u_\lambda=(I+\lambda\gamma)^{-1}\nabla
  u_\lambda+\lambda\gamma_\lambda(\nabla u_\lambda),
  \]
  the relative weak compactness of $(\nabla u_\lambda)$ immediately
  follows by the same property of $(\gamma_\lambda(\nabla u_\lambda))$
  proved above.
\end{proof}

We shall need below the following classical absolute continuity
result, whose proof can be found, for instance, in
\cite[p.~25]{Barbu:type}.
\begin{lemma}
  \label{lm:AC}
  Let $V$ and $H$ be Hilbert spaces with $V \embed H \embed
  V'$. Assume that $u \in L^2(a,b;V)$ and $u' \in L^2(a,b;V')$, where
  $u'$ is the derivative of $u$ in the sense of $V'$-valued
  distributions. Then there exists $\tilde{u} \in C([a,b];H)$ such
  that $u(t)=\tilde{u}(t)$ for almost all $t \in [a,b]$. Moreover, for
  any $v$ satisfying the same hypotheses of $u$, $\ip{u}{v}$ is
  absolutely continuous on $[a,b]$ and
  \[
  \frac{d}{dt} \ip[\big]{u(t)}{v(t)} = \ip[\big]{u'(t)}{v(t)} +
  \ip[\big]{u(t)}{v'(t)}.
  \]
\end{lemma}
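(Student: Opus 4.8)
The plan is to run the classical Lions--Magenes mollification-in-time argument, which is why the statement is quoted as classical (cf.\ \cite[p.~25]{Barbu:type}). Write $I=(a,b)$. First I would extend $u$ and $v$ to a slightly larger open interval $\tilde I=(a-\sigma,b+\sigma)$, preserving the regularity: reflect across the endpoints, $u(a-s):=u(a+s)$ and $u(b+s):=u(b-s)$ for small $s>0$, check that the reflected function still lies in $L^2(\tilde I;V)$ and that its $V'$-valued distributional derivative is the (sign-adjusted) reflection of $u'$, hence in $L^2(\tilde I;V')$; then multiply by a cutoff equal to $1$ on $[a,b]$, which changes nothing on $[a,b]$. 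Next, with $(\rho_\varepsilon)_{\varepsilon>0}$ a standard mollifier on $\erre$, set $u_\varepsilon:=\rho_\varepsilon*u$, $v_\varepsilon:=\rho_\varepsilon*v$; for $\varepsilon<\sigma$ these are smooth in $t$ with values in $V$, and convolution properties give $u_\varepsilon\to u$ in $L^2(a,b;V)$, $u_\varepsilon'=\rho_\varepsilon*u'\to u'$ in $L^2(a,b;V')$, and likewise for $v_\varepsilon$.

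Since $u_\varepsilon(t),v_\varepsilon(t)\in V$ for every $t$, the pairing $\ip{u_\varepsilon(t)}{v_\varepsilon(t)}$ (the duality pairing $V'$--$V$, which restricts to the $H$-scalar product) is of class $C^1$ in $t$, with
\[
\frac{d}{dt}\ip[\big]{u_\varepsilon(t)}{v_\varepsilon(t)}
= \ip[\big]{u_\varepsilon'(t)}{v_\varepsilon(t)} + \ip[\big]{u_\varepsilon(t)}{v_\varepsilon'(t)}.
\]
Applying this to $w_{\varepsilon,\delta}:=u_\varepsilon-u_\delta$ and integrating,
\[
\norm[\big]{w_{\varepsilon,\delta}(t)}^2_H
= \norm[\big]{w_{\varepsilon,\delta}(s)}^2_H
+ 2\int_s^t \ip[\big]{w_{\varepsilon,\delta}'(r)}{w_{\varepsilon,\delta}(r)}\,dr .
\]
Fixing $s$ in the full-measure set where $u_\varepsilon(s)\to u(s)$ in $H$, the first term on the right tends to $0$, and the integral is bounded, via Cauchy--Schwarz in the $V'$--$V$ duality, by $2\norm{w_{\varepsilon,\delta}'}_{L^2(a,b;V')}\norm{w_{\varepsilon,\delta}}_{L^2(a,b;V)}\to0$. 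Taking the supremum over $t$ shows that $(u_\varepsilon)$ is Cauchy in $C([a,b];H)$; its limit $\tilde u\in C([a,b];H)$ coincides with $u$ a.e.\ on $[a,b]$, since $u_\varepsilon\to u$ in $L^2(a,b;H)$. The same argument yields $v_\varepsilon\to\tilde v$ in $C([a,b];H)$ with $\tilde v=v$ a.e. This gives the first assertion of the lemma.

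Finally I would integrate the product-rule identity between $s$ and $t$ and let $\varepsilon\to0$: the left-hand side converges to $\ip{\tilde u(t)}{\tilde v(t)}-\ip{\tilde u(s)}{\tilde v(s)}$ for all $s,t\in[a,b]$ by the uniform convergence just established, while on the right-hand side $\ip{u_\varepsilon'}{v_\varepsilon}\to\ip{u'}{v}$ and $\ip{u_\varepsilon}{v_\varepsilon'}\to\ip{u}{v'}$ in $L^1(a,b)$, each being the pairing of an $L^2(V')$-convergent sequence with an $L^2(V)$-convergent one. Hence, for all $s,t\in[a,b]$,
\[
\ip[\big]{\tilde u(t)}{\tilde v(t)}
= \ip[\big]{\tilde u(s)}{\tilde v(s)}
+ \int_s^t \Bigl( \ip[\big]{u'(r)}{v(r)} + \ip[\big]{u(r)}{v'(r)} \Bigr)\,dr ,
\]
so $t\mapsto\ip{\tilde u(t)}{\tilde v(t)}$ is the indefinite integral of an $L^1$ function, hence absolutely continuous, and Lebesgue differentiation gives the stated formula for its derivative a.e.

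I do not expect any step to be genuinely hard — this is the reason the result is classical — but the point demanding the most care is the time-extension together with the simultaneous convergences of the two convolutions: one must ensure that the reflected extension really has its $V'$-distributional derivative in $L^2(\tilde I;V')$ (equal to the reflected $u'$) so that $u_\varepsilon\to u$ in $L^2(V)$ \emph{and} $u_\varepsilon'\to u'$ in $L^2(V')$ hold at once; once that is in place the rest is the standard mollification scheme, and one may alternatively just invoke \cite[p.~25]{Barbu:type}.
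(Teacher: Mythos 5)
The paper does not actually prove this lemma---it is quoted as classical with a pointer to \cite[p.~25]{Barbu:type}---and your reflection-plus-mollification argument is precisely the standard proof found there (and in Lions--Magenes/Temam), so it is correct in both substance and detail, including the one delicate point you flag (that the even reflection has no singular part in its $V'$-valued derivative, which follows since $u'\in L^1(a,b;V')$ gives $u$ an absolutely continuous $V'$-valued representative). The only step worth making fully explicit is why $u_\varepsilon(s)\to u(s)$ in $H$ for the \emph{full} family $\varepsilon\to0$ at some fixed $s$: this holds at every Lebesgue point of $u$ as an $H$-valued function (a set of full measure), or one can sidestep it entirely by integrating the energy identity over $s\in(a,b)$ and using that $(u_\varepsilon)$ is Cauchy in $L^2(a,b;H)$; either way the Cauchy estimate in $C([a,b];H)$ is sound.
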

\noindent As customary, both the duality pairing between $V$ and $V'$ as
well as the scalar product of $H$ have been denoted by the same symbol.

\medskip

From now on we shall assume, without loss of generality, that
$\lambda \in \mathopen]0,1\mathclose]$.
\begin{lemma}
  \label{lm:stp}
  There exists $\Omega' \subseteq \Omega$ with $\P(\Omega')=1$ and
  $M:\Omega' \to \erre$ such that
  \[
  \norm[\big]{u_\lambda(\omega)}_{L^\infty_tL^2_x}
  + \sqrt{\lambda} \, \norm[\big]{\nabla u_\lambda(\omega)}_{L^2_{t,x}}
  + \norm[\big]{k_\lambda(\nabla u_\lambda(\omega))}_{L^1_{t,x}} < M(\omega)
  \]
  for all $\omega \in \Omega'$.
\end{lemma}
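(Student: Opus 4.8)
The plan is to strip the stochastic integral off \eqref{eq:reg} and reduce the claim to a pathwise energy estimate for a deterministic equation with random coefficients, in the spirit of the pathwise/``averaged'' split alluded to in the introduction. Throughout write $k_\lambda$ for the Moreau--Yosida regularization of $k$; recall that $k_\lambda$ is convex and of class $C^1$ with $\nabla k_\lambda=\gamma_\lambda$, and that $0\leq k_\lambda\leq k$ and $k_\lambda(0)=0$. Set $z:=\int_0^\cdot G(s)\,dW(s)$. By the standing assumption $\E\int_0^T\norm{G(t)}^2_{\cL^2(U,V_0)}\,dt<\infty$, the process $z$ is a continuous, square-integrable $V_0$-valued martingale, so, since $V_0\embed W^{1,\infty}$, there is a set $\Omega'$ with $\P(\Omega')=1$ on which $u_0$ is $L^2$-valued, $z$ has paths in $C([0,T];W^{1,\infty})$, and $u_\lambda\in L^2(0,T;H^1_0)$ for every value of $\lambda$ that will be used in the sequel. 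Fix $\omega\in\Omega'$ and put $w_\lambda:=u_\lambda-z$. Subtracting $z(t)=\int_0^tG\,dW$ from \eqref{eq:reg} gives, pathwise and as an identity in $H^{-1}$,
\[
w_\lambda(t)=u_0+\int_0^t\bigl(\div\gamma_\lambda(\nabla u_\lambda(s))+\lambda\Delta u_\lambda(s)\bigr)\,ds,\qquad t\in[0,T];
\]
since $\gamma_\lambda$ is Lipschitz and vanishes at the origin, $\gamma_\lambda(\nabla u_\lambda)$ and $\nabla u_\lambda$ belong to $L^2_{t,x}$, whence $w_\lambda\in L^2(0,T;H^1_0)$ and $w_\lambda'\in L^2(0,T;H^{-1})$.

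By Lemma~\ref{lm:AC}, $t\mapsto\norm{w_\lambda(t)}^2$ is then absolutely continuous and, integrating by parts in space,
\[
\frac12\frac{d}{dt}\norm{w_\lambda(t)}^2=-\int_D\gamma_\lambda(\nabla u_\lambda)\cdot\nabla w_\lambda-\lambda\int_D\nabla u_\lambda\cdot\nabla w_\lambda.
\]
For the first term I would use the subgradient inequality for $k_\lambda$ at the points $\nabla u_\lambda$ and $\nabla z$, together with $\gamma_\lambda=\nabla k_\lambda$ and $\nabla w_\lambda=\nabla u_\lambda-\nabla z$, to get the pointwise bound $\gamma_\lambda(\nabla u_\lambda)\cdot\nabla w_\lambda\geq k_\lambda(\nabla u_\lambda)-k_\lambda(\nabla z)$; for the second, writing $\nabla u_\lambda=\nabla w_\lambda+\nabla z$ and applying Young's inequality gives $-\lambda\int_D\nabla u_\lambda\cdot\nabla w_\lambda\leq-\frac{\lambda}{2}\norm{\nabla w_\lambda}^2+\frac{\lambda}{2}\norm{\nabla z}^2$. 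Integrating in time, using $w_\lambda(0)=u_0$ and $k_\lambda\geq0$, and then $k_\lambda\leq k$, $\lambda\leq1$ and $V_0\embed H^1_0$ on the right-hand side, yields
\[
\frac12\norm{w_\lambda(t)}^2+\int_0^t\!\!\int_Dk_\lambda(\nabla u_\lambda)+\frac{\lambda}{2}\int_0^t\norm{\nabla w_\lambda(s)}^2\,ds\leq M_0(\omega),
\]
where $M_0(\omega)$ depends only on $\norm{u_0(\omega)}$, on $\sup\{k(\xi):\abs{\xi}\leq\norm{\nabla z(\omega)}_{L^\infty_{t,x}}\}$ (finite by continuity of $k$) and on $\sup_{t\leq T}\norm{z(\omega,t)}_{V_0}$, and in particular is independent of $\lambda$.

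Taking the supremum over $t\leq T$ in the last display bounds $\norm{w_\lambda(\omega)}_{L^\infty_tL^2_x}$, $\sqrt\lambda\,\norm{\nabla w_\lambda(\omega)}_{L^2_{t,x}}$ and $\norm{k_\lambda(\nabla u_\lambda(\omega))}_{L^1_{t,x}}$ in terms of $M_0(\omega)$; since $u_\lambda=w_\lambda+z$ and, by $\lambda\leq1$ and the embeddings $V_0\embed W^{1,\infty}$, $V_0\embed H^1_0$, one has $\norm{z(\omega)}_{L^\infty_tL^2_x}+\sqrt\lambda\,\norm{\nabla z(\omega)}_{L^2_{t,x}}\lesssim(1+\sqrt T)\sup_{t\leq T}\norm{z(\omega,t)}_{V_0}$, the triangle inequality produces the asserted bound with a suitable $M(\omega)$. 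I expect the delicate points to be the \emph{pathwise} justification of the energy identity — that is, verifying that $w_\lambda$ meets the hypotheses of Lemma~\ref{lm:AC} and that the integration by parts and the pointwise convexity inequalities are legitimate — and checking that the corrections produced by the stochastic convolution are controlled \emph{uniformly} in $\lambda$; introducing the artificial viscosity $\lambda\Delta$ and passing to $w_\lambda=u_\lambda-z$ are precisely what make these two points routine.
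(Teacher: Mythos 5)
Your argument is correct and is essentially the paper's own proof: the same decomposition $w_\lambda=u_\lambda-G\cdot W$ reducing to a deterministic equation handled via Lemma~\ref{lm:AC}, the same subgradient inequality for $k_\lambda$ combined with $k_\lambda\le k$ and the $L^\infty_{t,x}$-bound on $\nabla(G\cdot W)$ coming from $V_0\embed W^{1,\infty}$, and the same Young-inequality treatment of the viscous term. The only cosmetic difference is that you keep $\nabla w_\lambda$ in the quadratic term and recover $\nabla u_\lambda$ by the triangle inequality, whereas the paper substitutes back to $\nabla u_\lambda$ directly.
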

\begin{proof}
  Setting $v_\lambda := u_\lambda - G \cdot W$, equation
  \eqref{eq:reg} can be written as
  \[
  v_\lambda(t) - \int_0^t \div\bigl( \gamma_\lambda(\nabla
  u_\lambda(s)) + \lambda\nabla u_\lambda(s)\bigr)\,ds = u_0,
  \]
  or, equivalently, as
  \begin{equation}
    \label{eq:vul}
    v'_\lambda - \div \bigl( \gamma_\lambda(\nabla u_\lambda)
    + \lambda\nabla u_\lambda \bigr) = 0, \qquad v_\lambda(0) = u_0.
  \end{equation}
  By It\^o's isometry and Doob's inequality, one has
  \[
  \E\sup_{t\leq T} \norm[\bigg]{\int_0^t G(s)\,dW(s)}^2_{V_0} \lesssim
  \E\int_0^T \norm[\big]{G(s)}^2_{\cL(H,V_0)}\,ds < \infty,
  \]
  hence $G \cdot W \in \bL^2 L^\infty_t H^1_0$, because
  $V_0 \embed H^1_0$. In particular, since
  $u_\lambda \in \bL^2 L^\infty_t H^1_0$, it follows that
  $v_\lambda \in \bL^2 L^\infty_t H^1_0$. Moreover, since
  $\div\gamma_\lambda(\nabla u_\lambda)$ and $\Delta u_\lambda$ belong
  to $\bL^2 L^2_t H^{-1}$, by the previous identity we also deduce that
  $v'_\lambda(\omega) \in L^2_t H^{-1}$ for $\P$-a.a.
  $\omega \in \Omega$. In particular, taking into account the
  hypotheses on $u_0$ and $G$, there exists $\Omega' \subset \Omega$,
  with $\P(\Omega')=1$, such that
  \begin{gather*}
  u_0(\omega) \in L^2_x, 
  \quad G \cdot W(\omega,\cdot) \in L^\infty_t V_0,\\
  v_\lambda(\omega) \in L^2_t H^1_0, \quad 
  v'_\lambda(\omega) \in L^2_t H^{-1}
  \end{gather*}
  for all $\omega \in \Omega'$. Let us consider from now on a fixed
  but arbitrary $\omega \in \Omega'$. Taking the duality pairing of
  \eqref{eq:vul} by $v_\lambda$ and integrating (more precisely,
  applying Lemma~\ref{lm:AC}) implies that, for all $t \in [0,T]$,
  \begin{align*}
  &\frac12\norm{v_\lambda(t)}^2 + \int_0^t\!\!\int_D
  \gamma_\lambda(\nabla u_\lambda(s)) \cdot \nabla v_\lambda(s)\,dx\,ds\\
  &\hspace{3em}
  + \lambda\int_0^t\!\!\int_D \nabla u_\lambda(s) \cdot%
     \nabla v_\lambda(s)\,dx\,ds
  = \frac12\norm{u_0}^2,
  \end{align*}
  where $\norm{u_\lambda} \leq \norm{v_\lambda} + \norm{G\cdot W}$,
  hence
  $\norm{u_\lambda}^2 \leq 2\bigl(\norm{v_\lambda}^2 + \norm{G\cdot
    W}^2\bigr)$, as well as
  \[
  \norm{v_\lambda}^2 \geq \frac12 \norm{u_\lambda}^2 -
  \norm{G \cdot W}^2.
  \]
  Moreover, Young's inequality yields
  \begin{align*}
    \int_D\nabla u_\lambda\cdot\nabla v_\lambda &=
    \norm[\big]{\nabla u_\lambda}^2 
    - \int_D{\nabla u_\lambda\cdot\nabla(G\cdot W)}\\
    &\geq \frac{1}{2} \norm[\big]{\nabla u_\lambda}^2
     -\frac{1}{2} \norm[\big]{\nabla(G\cdot W)}^2,
  \end{align*}
  hence also, taking into account the previous estimate,
  \begin{equation}
  \label{eq:pippo}
  \begin{split}
  &\frac12 \norm[\big]{u_\lambda(t)}^2 
  + 2 \int_0^t\!\!\int_D  \gamma_\lambda(\nabla u_\lambda(s)) \cdot
      \nabla v_\lambda(s)\,dx\,ds
  + \lambda \int_0^t \norm[\big]{\nabla u_\lambda(s)}^2\,ds\\
  &\hspace{3em} \leq \norm[\big]{u_0}^2 + \norm[\big]{G\cdot W(t)}^2
  + \lambda \int_0^t \norm[\big]{\nabla(G\cdot W(s))}^2\,ds.
  \end{split}
  \end{equation}
  Let $k_\lambda$ be the Moreau-Yosida regularization of $k$, i.e.
  \[
  k_\lambda(x) := \inf_{y \in \erre^n} \Bigl( 
    k(y) + \frac{\abs{x-y}^2}{2\lambda} \Bigr), \qquad \lambda>0.
  \]
  As is well known, $k_\lambda$ is a proper convex function that
  converges pointwise to $k$ from below, and
  $\partial k_\lambda = \gamma_\lambda$. Therefore, it follows from
  \[
  \gamma_\lambda(x)\cdot(x-y) \geq k_\lambda(x)-k_\lambda(y)
  \geq k_\lambda(x)-k(y) \qquad \forall x,y \in \erre^n
  \]
  that
  \begin{align*}
    &\int_0^t\!\!\int_D \gamma_\lambda(\nabla u_\lambda(s)) \cdot
      \nabla v_\lambda(s)\,dx\,ds\\
    &\hspace{3em} = \int_0^t\!\!\int_D 
      \gamma_\lambda(\nabla u_\lambda(s,x))(\nabla u_\lambda(s,x) 
      - \nabla(G \cdot W(s,x)))\,dx\,ds\\
    &\hspace{3em} \geq \int_0^t\!\!\int_D k_\lambda(\nabla u_\lambda(s,x))\,dx\,ds
      - \int_0^t\!\!\int_D k(\nabla(G\cdot W(s,x)))\,dx\,ds,
  \end{align*}
  hence also
  \begin{align*}
  &\frac12 \norm[\big]{u_\lambda(t)}^2 
  + 2 \int_0^t\!\!\int_D  {k_\lambda(\nabla u_\lambda(s, x))\,dx\,ds}
  + \lambda\int_0^t\norm[\big]{\nabla u_\lambda(s)}^2\,ds\\
  &\hspace{3em} \leq \norm[\big]{u_0}^2 + \norm[\big]{G\cdot W(t)}^2
  + \lambda \int_0^t \norm[\big]{\nabla(G\cdot W(s))}^2\,ds\\
  &\hspace{3em}\quad + 2\int_0^t\!\!\int_D k(\nabla(G\cdot W(s,x)))\,dx\,ds.
  \end{align*}
  Taking the supremum with respect to $t$ yields
  \begin{align*}
  &\norm[\big]{u_\lambda}^2_{C_t L^2_x}
  + \norm[\big]{k_\lambda(\nabla u_\lambda)}_{L^1_{t,x}}
  +\lambda \norm[\big]{\nabla u_\lambda}^2_{L^2_{t,x}}\\
  &\hspace{3em} \lesssim \norm[\big]{u_0}^2_{L^2_x}
  + \norm[\big]{G\cdot W}^2_{L^\infty_tL^2_x}
  + \norm[\big]{G\cdot W}^2_{L^2_t H^1_0}
  + \norm[\big]{k(\nabla (G\cdot W))}_{L^1_{t,x}}.
  \end{align*}
  As already observed above, the first three terms on the right-hand
  side are clearly finite. Moreover, since $V_0 \embed W^{1,\infty}$,
  one has
  \[ 
  \norm[\big]{k(\nabla(G\cdot W))}_{L^1_{t,x}} \lesssim_{T,D}
  \norm[\big]{k(\nabla(G\cdot W))}_{L^\infty_{t,x}}
  < \infty
  \]
  by the continuity of $k$. Since $\omega$ was chosen arbitrarily in
  $\Omega'$, the proof is completed.
\end{proof}

\begin{lemma}
  \label{lm:L1t}
  There exists a set $\Omega'$, with $\P(\Omega')=1$, such that, for
  all $\omega \in \Omega'$, the families $(\gamma_\lambda(\nabla
  u_\lambda))$ and $(\nabla u_\lambda)$ are relatively weakly compact
  in $L^1_{t,x}$.
\end{lemma}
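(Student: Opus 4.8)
The plan is to follow the proof of Lemma~\ref{lm:L1a} \emph{pathwise}, replacing the averaged a~priori estimate of Lemma~\ref{lm:aspetta} by a pathwise counterpart. There is, however, one catch: Lemma~\ref{lm:stp} only yields a pathwise bound on $\norm{k_\lambda(\nabla u_\lambda)}_{L^1_{t,x}}$, and since $k_\lambda(x)=k\bigl((I+\lambda\gamma)^{-1}x\bigr)+\tfrac{\lambda}{2}\abs{\gamma_\lambda(x)}^2$, this controls $\int_0^T\!\!\int_D\abs{\gamma_\lambda(\nabla u_\lambda)}^2$ only up to a factor $\lambda^{-1}$, hence gives no $\lambda$-uniform information on $(\gamma_\lambda(\nabla u_\lambda))$ in $L^1_{t,x}$. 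The real task is therefore to extract from the energy estimate a pathwise bound on $\int_0^T\!\!\int_D\gamma_\lambda(\nabla u_\lambda)\cdot\nabla u_\lambda$ that is uniform over $\lambda\in\mathopen]0,1\mathclose]$; this is the main obstacle, and once it is in hand the rest is a verbatim, $\omega$-by-$\omega$ repetition of Lemma~\ref{lm:L1a}.

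To obtain that bound I would go back to \eqref{eq:pippo}, which holds for every $\omega$ in the full-measure set $\Omega'$ produced in the proof of Lemma~\ref{lm:stp}, write $\nabla v_\lambda=\nabla u_\lambda-\nabla(G\cdot W)$, and estimate the cross term by the scaled Fenchel--Young inequality $a\cdot b\le\tfrac12 k^*(a)+\tfrac12 k(2b)$ together with \eqref{eq:pluto}:
\[
\gamma_\lambda(\nabla u_\lambda)\cdot\nabla(G\cdot W)
\le\tfrac12 k^*\bigl(\gamma_\lambda(\nabla u_\lambda)\bigr)+\tfrac12 k\bigl(2\nabla(G\cdot W)\bigr)
\le\tfrac12\,\gamma_\lambda(\nabla u_\lambda)\cdot\nabla u_\lambda+\tfrac12 k\bigl(2\nabla(G\cdot W)\bigr).
\]
Inserting this lower bound for $\gamma_\lambda(\nabla u_\lambda)\cdot\nabla v_\lambda$ into \eqref{eq:pippo} retains $\int_0^t\!\!\int_D\gamma_\lambda(\nabla u_\lambda)\cdot\nabla u_\lambda$ on the left-hand side and moves a noise term to the right; dropping the remaining nonnegative terms on the left, taking the supremum over $t\in[0,T]$, and using $\lambda\le1$, one arrives at
\[
\int_0^T\!\!\int_D\gamma_\lambda(\nabla u_\lambda)\cdot\nabla u_\lambda
\le\norm{u_0}^2_{L^2_x}+\norm{G\cdot W}^2_{L^\infty_tL^2_x}+\norm{G\cdot W}^2_{L^2_tH^1_0}+\norm[\big]{k(2\nabla(G\cdot W))}_{L^1_{t,x}}=:M(\omega).
\]
Arguing exactly as in the proof of Lemma~\ref{lm:stp}, $M(\omega)<\infty$ for $\omega$ in a full-measure subset of $\Omega'$, still denoted by $\Omega'$: the first three terms are finite because $G\cdot W\in\bL^2L^\infty_tV_0$ (hence also in $\bL^2L^2_tH^1_0$), and the last because $V_0\embed W^{1,\infty}$ and $k$ is continuous, so $k(2\nabla(G\cdot W))\in\bL^1L^\infty_{t,x}$.

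It then remains to run the compactness argument of Lemma~\ref{lm:L1a} for each fixed $\omega\in\Omega'$. By \eqref{eq:pluto},
\[
\int_0^T\!\!\int_D k^*\bigl(\gamma_\lambda(\nabla u_\lambda)\bigr)\le M(\omega),
\qquad
\int_0^T\!\!\int_D k\bigl((I+\lambda\gamma)^{-1}\nabla u_\lambda\bigr)\le M(\omega)
\]
for all $\lambda\in\mathopen]0,1\mathclose]$; since $k^*$ and $k$ are superlinear at infinity and $(0,T)\times D$ has finite measure, the de la Vall\'ee Poussin criterion shows that $(\gamma_\lambda(\nabla u_\lambda))$ and $((I+\lambda\gamma)^{-1}\nabla u_\lambda)$ are uniformly integrable on $(0,T)\times D$, hence relatively weakly compact in $L^1_{t,x}$ by the Dunford--Pettis theorem. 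Finally, since $\nabla u_\lambda=(I+\lambda\gamma)^{-1}\nabla u_\lambda+\lambda\gamma_\lambda(\nabla u_\lambda)$ and $\abs{\lambda\gamma_\lambda(\nabla u_\lambda)}\le\abs{\gamma_\lambda(\nabla u_\lambda)}$ for $\lambda\le1$, the family $(\nabla u_\lambda)$ is uniformly integrable as well, hence relatively weakly compact in $L^1_{t,x}$. Thus the only genuinely new ingredient is the $\lambda$-uniform pathwise bound on $\int_0^T\!\!\int_D\gamma_\lambda(\nabla u_\lambda)\cdot\nabla u_\lambda$; everything else is Lemma~\ref{lm:L1a} applied $\omega$ by $\omega$.
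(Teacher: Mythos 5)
Your proof is correct and follows essentially the same route as the paper: the paper likewise derives the pathwise, $\lambda$-uniform bound on $\int_0^T\!\!\int_D\gamma_\lambda(\nabla u_\lambda)\cdot\nabla u_\lambda$ from \eqref{eq:pippo} via the scaled Fenchel--Young inequality and the absorption $k^*(\gamma_\lambda(x))\le\gamma_\lambda(x)\cdot x$, and then concludes by de la Vall\'ee Poussin, Dunford--Pettis, and the decomposition $\nabla u_\lambda=(I+\lambda\gamma)^{-1}\nabla u_\lambda+\lambda\gamma_\lambda(\nabla u_\lambda)$. (The only blemish is the incidental claim that $k(2\nabla(G\cdot W))\in\bL^1L^\infty_{t,x}$, which need not hold since $k$ may grow arbitrarily fast; but only $\P$-a.s.\ finiteness of $\norm{k(2\nabla(G\cdot W))}_{L^1_{t,x}}$ is needed for this pathwise statement, and that you establish correctly.)
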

\begin{proof}
  Let $\Omega'$ be defined as in the proof of Lemma \ref{lm:stp}, and
  fix an arbitrary $\omega \in \Omega'$. By \eqref{eq:pippo}, since
  $v_\lambda=u_\lambda - G \cdot W$, it follows that
  \begin{align*}
  &\int_0^t\!\!\int_D\gamma_\lambda(\nabla u_\lambda(s)) \cdot 
  \nabla u_\lambda(s)\,dx\,ds\\
  &\hspace{3em} \leq
  \frac12 \norm{u_0}^2 + \frac12 \norm{G\cdot W(t)}^2
  + \frac12 \int_0^t \norm{G\cdot W(s)}_{H^1_0}^2\,ds\\
  &\hspace{3em} \quad + \int_0^t\!\!\int_D
  \gamma_\lambda(\nabla u_\lambda(s))\cdot\nabla (G\cdot W(s))\,dx\,ds
  \end{align*}
  for all $t \leq T$. Thanks to Young's inequality, convexity of
  $k^*$, and $k^*(0)=0$, one has
  \begin{align*}
    \gamma_\lambda(\nabla u_\lambda)\cdot\nabla (G\cdot W) &=
    \frac12 \gamma_\lambda(\nabla u_\lambda) \cdot 2\nabla (G\cdot W)\\
    &\leq \frac12 k^*\bigl( \gamma_\lambda(\nabla u_\lambda)\bigr)
    + k(2\nabla(G \cdot W)).
  \end{align*}
  Recalling that
  $k^*(\gamma_\lambda(x)) \leq \gamma_\lambda(x) \cdot x$ for all
  $x \in \erre^n$, rearranging terms one gets
  \begin{align*}
  \int_0^T\!\!\int_D k^*(\nabla u_\lambda(s)) \,dx\,ds &\lesssim
    \norm{u_0}^2 + \norm{G\cdot W(T)}^2
  + \int_0^T \norm{G\cdot W(t)}_{H^1_0}^2\,ds\\
  &\quad + \int_0^T\!\!\int_D k\bigl(2\nabla (G\cdot W(s))\bigr) \,dx\,ds,
  \end{align*}
  where all terms on the right-hand side are finite, as already
  established in the proof of Lemma \ref{lm:stp}. Appealing again to
  the criteria by de la Vall\'ee Poussin and Dunford-Pettis, we
  immediately infer that
  $(\gamma_\lambda(\nabla u_\lambda(\omega,\cdot)))$ is relatively
  weakly compact in $L^1_{t,x}$.

  Denoting by $M$ (a constant depending on $\omega$) the right-hand
  side of the previous inequality, the above estimates also yield
  \[
  \norm[\big]{\gamma_\lambda(\nabla u_\lambda) \cdot \nabla u_\lambda}_{L^1_{t,x}}
  \lesssim M,
  \]
  hence also, recalling that
  $k((I+\lambda\gamma)^{-1}x) \leq \gamma_\lambda(x) \cdot x$,
  \[
  \norm[\big]{k\bigl((I+\lambda\gamma)^{-1}\nabla u_\lambda\bigr)}_{L^1_{t,x}}
  \lesssim M.
  \]
  This implies, in complete analogy to the previous case, that
  $\bigl((I+\lambda\gamma)^{-1}\nabla u_\lambda\bigr)$ is relatively weakly
  compact in $L^1_{t,x}$. Since
  \[
  \nabla u_\lambda = \lambda \gamma_\lambda(\nabla u_\lambda)
  + (I+\lambda\gamma)^{-1}\nabla u_\lambda,
  \]
  the relative weak compactness of $(\nabla u_\lambda(\omega,\cdot))$
  in $L^1_{t,x}$ follows immediately.
\end{proof}

\subsection{Proof of Proposition \ref{prop:V0}}
Let $\omega \in \Omega'$ be arbitrary but fixed, where $\Omega'$ is a
subset of $\Omega$ with probability one, chosen as in the proof of
Lemma~\ref{lm:stp}. The relative weak compactness of
$(\gamma_\lambda(\nabla u_\lambda))$ in $L^1_{t,x}$, proved in
Lemma~\ref{lm:L1t}, implies that there exists $\eta \in L^1_{t,x}$ such
that $\gamma_{\mu}(\nabla u_{\mu}) \to \eta$ weakly in $L^1_{t,x}$,
where $\mu$ is a subsequence of $\lambda$. This in turn implies that
\[
\int_0^t \div\gamma_{\mu}(\nabla u_{\mu}(s))\,ds \longrightarrow
\int_0^t \div\eta(s)\,ds
\qquad \text{weakly in } V_0'
\]
for all $t \in [0,T]$. In fact, for any $\phi_0 \in V_0$, setting
$\phi:=s \mapsto 1_{[0,t]}(s) \phi_0 \in L^\infty_t V_0$, recalling
that $V_0\embed W^{1,\infty}$, we have
\begin{align*}
&\int_0^t \ip[\big]{-\div\gamma_{\mu}(\nabla u_{\mu}(s))}{\phi_0}_{V_0}\,ds =
\int_0^T \ip[\big]{-\div\gamma_{\mu}(\nabla u_{\mu}(s))}{\phi(s)}_{V_0}\,ds\\
&\hspace{3em} = \int_0^T\!\!\int_D \gamma_{\mu}(\nabla u_{\mu}(s)) \cdot%
  \nabla\phi(s)\,ds\\
&\hspace{3em}\quad 
\longrightarrow \int_0^T\!\!\int_D \eta(s)\cdot\nabla \phi(s)\,ds
    = \int_0^t \ip[\big]{-\div\eta(s)}{\phi_0}\,ds
\end{align*}
as $\mu \to 0$.
Moreover, $\sqrt{\lambda} u_\lambda$ is bounded in $L^2_t H^1_0$ thanks
to Lemma~\ref{lm:stp}, hence, recalling that $\Delta$ is an
isomorphism of $H^1_0$ and $H^{-1}$, $\lambda \Delta u_\lambda \to 0$
in $L^2_t H^{-1}$ as $\lambda \to 0$, in particular
\[
\lambda \int_0^t \Delta u_\lambda(s)\,ds \longrightarrow 0
\qquad \text{in } H^{-1}
\]
for all $t \in [0,T]$ as $\lambda \to 0$.
Therefore, considering the regularized equation
\[
u_\mu(t) - \int_0^t \div \gamma_\mu(\nabla u_\mu(s))\,ds
- \mu \int_0^t \Delta u_\mu(s)\,ds = u_0 + G \cdot W(t)
\]
and passing to the limit as $\mu \to 0$, we infer that $u_\mu(t) \to
u(t)$ weakly in $V_0'$ for all $t \in [0,T]$, hence one can write
\begin{equation}
\label{eq:reglim}
u(t) - \int_0^t \div\eta(s)\,ds  = u_0 + G \cdot W(t)
\qquad \text{in $V_0'$}
\end{equation}
for all $t \in [0,T]$. Since $\div\eta \in L^1_t V_0'$ and
$G \cdot W \in L^\infty_t V_0$, it immediately follows that
$u \in C_t V_0'$.
Moreover, since, thanks to Lemma~\ref{lm:stp}, $(u_\mu(t))$ is bounded
in $L^2$, we also have $u_\mu(t) \to u(t)$ weakly in $L^2$. In fact,
let $\varepsilon>0$ and $\psi \in L^2$ be arbitrary. Since $V_0$ is
dense in $L^2$, there exists $\phi \in V_0$ with
$\norm[\big]{\psi-\phi}<\varepsilon$, and one can write
\[
\abs[\big]{\ip[\big]{u_\mu(t)-u_\nu(t)}{\psi}} \leq
\abs[\big]{\ip[\big]{u_\mu(t)-u_\nu(t)}{\psi-\phi}}
+ \abs[\big]{\ip[\big]{u_\mu(t)-u_\nu(t)}{\phi}},
\]
where the second term on the right-hand side converges to zero as
$\mu,\,\nu \to 0$, and
\[
\abs[\big]{\ip[\big]{u_\mu(t)-u_\nu(t)}{\psi-\phi}} \leq
\norm[\big]{u_\mu(t)-u_\nu(t)} \, \norm[\big]{\psi-\phi}
< N \varepsilon,
\]
so that, recalling that Hilbert spaces are weakly sequentially
complete, $u_\mu(t)$ converges weakly in $L^2$, necessarily to $u(t)$,
for all $t \in [0,T]$. This also immediately implies that $u \in
L^\infty_t L^2_x$. From this, together with $u \in
C_t V_0'$, it follows in turn that $u \in C_w([0,T];L^2)$ by a criterion
due to Strauss (see~\cite[Theorem~2.1]{Strauss} -- here and below
$C_w([0,T];E)$ stands for the space of space of weakly continuous
functions from $[0,T]$ to a Banach space $E$).
Furthermore, since all terms in \eqref{eq:reglim} except the second
one on the left-hand side take values in $L^2$, it follows that
\eqref{eq:reglim} is satisfied also as an identity in $L^2$.

Let us show that $u \in L^1_t W^{1,1}_0$: the relative weak
compactness of $(\nabla u_\lambda)$ in $L^1_{t,x}$, proved in
Lemma~\ref{lm:L1t}, implies that there exists $v \in L^1_{t,x}$ such
that, along a subsequence of $\lambda$ which can be assumed to
coincide with $\mu$, $\nabla u_{\mu} \to v$ weakly in
$L^1_{t,x}$. Taking into account that $u_{\mu} \in H^1_0$ for all
$\mu$ and that $u_\mu \to u$ weakly* in $L^\infty_tL^2_x$, it easily
follows that $v=\nabla u$ a.e. in $[0,T] \times D$ and that $u \in
L^1_t W^{1,1}_0$.

\medskip

As a next step, we are going to show that $\eta = \gamma(\nabla u)$
a.e. in $(0,T) \times D$. For this we shall need the ``energy''
identity proved in the following lemma.
\begin{lemma}
  \label{lm:testing}
  Assume that
  \[
  y(t) - \int_0^t \div \zeta(s)\,ds = y_0 + f(t)
  \qquad\text{in } L^2 \quad \forall t\in[0,T],
  \]
  where $y_0 \in L^2_x$, $y \in L^\infty_tL^2_x \cap L^1_tW^{1,1}_0$,
  $\zeta \in L^1_{t,x}$, and $f \in L^2_tV_0$ with $f(0)=0$.
  Furthermore, assume that there exists $c>0$ such that
  \[
  k(c\nabla y) + k^*(c\zeta) \in L^1_{t,x}.
  \]
  Then
  \[
  \norm[\big]{y(t) - f(t)}^2
  + 2\int_0^t\!\!\int_D \zeta(s,x) \cdot%
      \nabla\bigl(y(s,x)-f(s,x)\bigr)\,dx\,ds
  = \norm[\big]{y_0}^2 \qquad \forall t\in[0,T].
  \]
\end{lemma}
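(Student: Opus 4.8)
The plan is to deduce this energy identity from Proposition~\ref{prop:ito} by absorbing the perturbation $f$ into the test function, i.e.\ by applying the It\^o formula not to $y$ itself but to the process $w := y - f$. First I would observe that, since $f \in L^2_t V_0 \embed L^2_t H^1_0$ and $f(0)=0$, the process $w = y - f$ satisfies $w(0)=y_0$, $w \in L^\infty_t L^2_x \cap L^1_t W^{1,1}_0$, and, subtracting the equation for $f$ (which has no divergence term and no stochastic term here, as we are in the pathwise setting) from the equation for $y$,
\[
w(t) - \int_0^t \div\zeta(s)\,ds = y_0
\qquad\text{in } L^2 \quad\forall t\in[0,T].
\]
Thus $w$ solves the same type of equation as in Proposition~\ref{prop:ito}, with $\alpha=0$, with the same $\zeta$, and with $C\equiv 0$ (no noise). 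The integrability hypothesis $k(c\nabla y)+k^*(c\zeta)\in L^1_{t,x}$ does not immediately give $k(c'\nabla w)+k^*(c'\zeta)\in L^1_{t,x}$, but since $\nabla w = \nabla y - \nabla f$ and $f\in L^2_tV_0\embed L^2_tW^{1,\infty}$, so that $\nabla f\in L^\infty_{t,x}$, the superlinear convexity of $k$ together with the bound $\limsup k(-x)/k(x)<\infty$ and convexity estimates of the form $k(\tfrac{c}{2}(a+b))\le \tfrac12 k(ca)+\tfrac12 k(cb)$ (or rather $k(\tfrac{c}{2}\nabla w) \lesssim 1 + k(c\nabla y) + k(-c\nabla f)$, with the last term bounded) give $k(c'\nabla w)\in L^1_{t,x}$ for a suitable $c'>0$; and $k^*(c'\zeta)\in L^1_{t,x}$ is inherited directly from the hypothesis (shrinking $c'$ if necessary). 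Hence the hypotheses of Proposition~\ref{prop:ito} are met.

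Applying Proposition~\ref{prop:ito} to $w$ (with $\alpha=0$, $C=0$, $y_0$ as given) then yields directly
\[
\frac12\norm{w(t)}^2
+ \int_0^t\!\!\int_D \zeta(s,x)\cdot\nabla w(s,x)\,dx\,ds
= \frac12\norm{y_0}^2
\qquad\forall t\in[0,T],
\]
and substituting back $w = y-f$ and multiplying by $2$ gives exactly the claimed identity. One should check that Proposition~\ref{prop:ito}, as stated, covers the purely deterministic (pathwise) case: it is stated for measurable adapted processes on $\Omega\times[0,T]$, and a function depending only on $t$ (with, say, the trivial filtration, or simply viewed $\omega$-wise) is a degenerate instance, with the stochastic-integral term vanishing since $C\equiv0$; the a.s.\ conclusion then holds for every $\omega$, i.e.\ deterministically.

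The main obstacle is the integrability transfer: verifying that $k(c'\nabla w)\in L^1_{t,x}$ for some $c'>0$. This is where the two structural hypotheses on $k$ — superlinearity and $\limsup_{\abs{x}\to\infty} k(-x)/k(x)<\infty$ — are essential, exactly as in the uniform integrability argument at the end of the proof of Proposition~\ref{prop:ito}; the boundedness of $\nabla f$ (from $V_0\embed W^{1,\infty}$ and $f\in L^2_tV_0$, which gives $\nabla f\in L^2_tL^\infty_x$, hence in $L^1_{t,x}$ after noting $k$ is continuous so $k(c\nabla f)$ is bounded on bounded sets) is what makes the perturbation term harmless. Everything else — the subtraction of equations, the bookkeeping with $c'$, and the final substitution — is routine.
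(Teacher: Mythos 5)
Your proof is correct in substance but takes a genuinely different route from the paper. The paper does \emph{not} invoke Proposition~\ref{prop:ito} as a black box: it reruns the whole mollification argument for the perturbed equation, applying the deterministic chain rule of Lemma~\ref{lm:AC} to $y^\delta - f^\delta$ (in place of It\^o's formula) and then passing to the limit via Vitali's theorem, with uniform integrability of $\zeta^\delta\cdot\nabla(y^\delta-f^\delta)$ obtained from exactly the convexity splitting you describe, $k(\tfrac{c}{2}(\nabla y^\delta-\nabla f^\delta))\le\tfrac12 k(c\nabla y^\delta)+\tfrac12 k(c(-\nabla f^\delta))$, together with Jensen's inequality for sub-Markovian operators. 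Your reduction --- set $w:=y-f$, observe $w(t)-\int_0^t\div\zeta(s)\,ds=y_0$, and apply Proposition~\ref{prop:ito} with $\alpha=0$ and $C\equiv 0$ in the degenerate deterministic setting --- buys you the limit passage for free and isolates the only genuinely new work, namely the transfer of the integrability hypothesis from $\nabla y$ to $\nabla w$ (your constant $c'=c/2$ works, using $k^*(\tfrac{c}{2}\zeta)\le\tfrac12 k^*(c\zeta)$ as well). The paper's longer route avoids having to verify all the standing hypotheses of the Proposition for $w$.

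Two caveats, which you should make explicit. First, invoking Proposition~\ref{prop:ito} requires $w\in L^\infty_t L^2_x$, hence $f\in L^\infty_t L^2_x$; likewise your claim $k(c\nabla f)\in L^1_{t,x}$ needs uniform-in-time control of $\norm{\nabla f(t)}_{L^\infty_x}$, since $k$ may grow arbitrarily fast and $f\in L^2_tV_0$ only gives $\nabla f\in L^2_tL^\infty_x$. Both points amount to upgrading the stated hypothesis $f\in L^2_tV_0$ to $f\in L^\infty_tV_0$; this is harmless because it is what holds in the only application ($f=G\cdot W\in L^\infty_tV_0$ pathwise), and the paper's own proof makes the same silent upgrade (it asserts $k(c\nabla f)\in L^1_{t,x}$ ``because $f\in W^{1,\infty}$''), but you should state it. Second, you should say a word on why Proposition~\ref{prop:ito}, whose hypotheses and conclusion are phrased in terms of expectations and ``$\P$-a.s.'', applies pathwise: either run it on a trivial probability space, or note that with $C\equiv0$ its proof is entirely deterministic; your remark to this effect is adequate.
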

\begin{proof}
  The proof if analogous to that of Proposition~\ref{prop:ito}, of
  which we borrow the notation and the setup. In particular, let $m
  \in \mathbb{N}$ be such that
  \[
  y^\delta(t) - \int_0^t \div \zeta^\delta(s)\,ds = y^\delta_0 + f^\delta(t)
  \qquad\text{in } L^2 \quad \forall t\in[0,T],
  \]
  hence, by Lemma~\ref{lm:AC},
  \[
  \norm[\big]{y^\delta(t) - f^\delta(t)}^2
  + 2\int_0^t\!\!\int_D \zeta^\delta \cdot%
      \nabla\bigl( y^\delta - f^\delta \bigr)
  = \norm[\big]{y^\delta_0}^2 \qquad \forall t\in[0,T],
  \]
  where, as $\delta \to 0$,
  $\norm[\big]{y^\delta(t) - f^\delta(t)}^2 \to \norm[\big]{y(t) -
    f(t)}^2$ for all $t \in ]0,T]$ and
  $\norm[\big]{y^\delta_0}^2 \to \norm[\big]{y_0}^2$. Moreover, since
  $y^\delta - f^\delta \to y - f$ in $L^1_tW^{1,1}_0$ and
  $\zeta^\delta \to \zeta$ in $L^1_{t,x}$, we have that, up to
  selecting a subsequence,
  \[
  \zeta^\delta \cdot \nabla\bigl(y^\delta - f^\delta\bigr) \longrightarrow
  \zeta \cdot \nabla\bigl(y - f\bigr)
  \]
  almost everywhere in $[0,T] \times D$. Therefore, taking Vitali's
  theorem into account, the lemma is proved if we show that
  $\zeta^\delta \cdot \nabla(y^\delta - f^\delta)$ is uniformly
  integrable: one has, by Young's inequality and convexity,
  \begin{align*}
  \frac{c^2}2 \zeta^\delta \cdot \nabla(y^\delta - f^\delta) &\leq
  k\bigl( c/2 (\nabla y^\delta - \nabla f^\delta) \bigr) 
  + k^*\bigl( c\zeta^\delta \bigr)\\
  &\leq \frac12 k\bigl(c\nabla y^\delta \bigr)
  + \frac12 k\bigl(c(-\nabla f^\delta) \bigr)
  + k^*\bigl( c\zeta^\delta \bigr),
  \end{align*}
  as well as
  \begin{align*}
  - \frac{c^2}2 \zeta^\delta \cdot \nabla(y^\delta - f^\delta) &\leq
  k\bigl( c/2 (-\nabla y^\delta + \nabla f^\delta) \bigr) 
  + k^*\bigl( c\zeta^\delta \bigr)\\
  &\leq \frac12 k\bigl(c(-\nabla y^\delta) \bigr)
  + \frac12 k\bigl(c\nabla f^\delta \bigr)
  + k^*\bigl( c\zeta^\delta \bigr),
  \end{align*}
  hence
  \begin{align*}
  c^2 \abs[\big]{\zeta^\delta \cdot \nabla(y^\delta - f^\delta)}
  &\leq k\bigl(c\nabla y^\delta \bigr) + k\bigl(c(-\nabla y^\delta) \bigr)\\
  &\quad + k\bigl(c\nabla f^\delta \bigr)
  + k\bigl(c(-\nabla f^\delta) \bigr) + 4k^*\bigl( c\zeta^\delta \bigr).
  \end{align*}
  It follows by Jensen's inequality for sub-Markovian operators,
  recalling that $(I-\delta\Delta)^{-m}$ and $\nabla$ commute, that
  \begin{align*}
  c^2 \abs[\big]{\zeta^\delta \cdot \nabla(y^\delta - f^\delta)}
  &\leq (I-\delta\Delta)^{-m} \Bigl(%
   k(c\nabla y) + k\bigl(c(-\nabla y) \bigr)\\
  &\qquad + k(c\nabla f)
  + k\bigl(c(-\nabla f) \bigr) + 4k^*(c\zeta) \Bigr),
  \end{align*}
  where $k(c\nabla y)$ and $k^*(c\zeta)$ belong to $L^1_{t,x}$ by
  assumption, and the same holds for $k(c\nabla f) + k(c(-\nabla f))$
  because $f \in W^{1,\infty}$. Moreover, the hypothesis
  $\limsup_{\abs{x} \to \infty} k(-x)/k(x)<\infty$ implies that
  \[
  \int_0^T\!\!\int_D k(c(-\nabla y))
  \lesssim 1 + \int_0^T\!\!\int_D k(\nabla y) < \infty,
  \]
  therefore, taking into account that $(I-\delta\Delta)^{-m}$ is a
  contraction in $L^1$, we obtain that
  $c^2\abs{\zeta^\delta \cdot \nabla(y^\delta - f^\delta)}$ is
  dominated by a sequence that converges in $L^1_{t,x}$, which
  immediately implies that
  $\zeta^\delta \cdot \nabla(y^\delta - f^\delta)$ is uniformly
  integrable in $[0,T] \times D$.
\end{proof}

As in the proof of Lemma~\ref{lm:stp}, it follows from \eqref{eq:vul}
and Lemma~\ref{lm:AC} that
\begin{align*}
  &\frac12\norm{v_\lambda(t)}^2 + \int_0^t\!\!\int_D
  \gamma_\lambda(\nabla u_\lambda(s)) \cdot \nabla v_\lambda(s)\,dx\,ds\\
  &\hspace{3em}
  + \lambda\int_0^t\!\!\int_D \nabla u_\lambda(s) \cdot
    \nabla v_\lambda(s)\,dx\,ds
  = \frac12\norm{u_0}^2
\end{align*}
for all $t \in [0,T]$, where $v_\lambda = u_\lambda - G \cdot W$. This
immediately implies
\begin{equation}
  \label{eq:czt}
  \begin{split}
  &\frac12\norm{v_\lambda(t)}^2 + \int_0^t\!\!\int_D
  \gamma_\lambda(\nabla u_\lambda(s)) \cdot \nabla u_\lambda(s)\,dx\,ds\\
  &\hspace{3em}
  \leq \frac12\norm{u_0}^2
  + \int_0^t\!\!\int_D
  \gamma_\lambda(\nabla u_\lambda(s)) \cdot \nabla (G \cdot W(s))\,dx\,ds\\
  &\hspace{3em} \quad + \lambda \int_0^t\!\!\int_D \nabla u_\lambda(s) \cdot%
  \nabla(G \cdot W(s))\,dx\,ds,
  \end{split}
\end{equation}
where
\[
\liminf_{\mu \to 0} \norm[\big]{v_\mu(t)} \geq
\norm[\big]{u(t) - G \cdot W(t)} \qquad \forall t \in [0,T]
\]
by the weak lower semicontinuity of the norm and the weak convergence
of $u_\mu(t)$ to $u(t)$ in $L^2$. Moreover, recalling that
$\gamma_\mu(\nabla u_\mu) \to \eta$ weakly in $L^1_{t,x}$ and
$\nabla(G\cdot W) \in L^\infty_{t,x}$, as $V_0 \embed
W^{1,\infty}$, we have
\[
\int_0^t\!\!\int_D
\gamma_\mu(\nabla u_\mu(s)) \cdot \nabla (G \cdot W(s))\,dx\,ds
\longrightarrow
\int_0^t\!\!\int_D \eta(s) \cdot \nabla (G \cdot W(s))\,dx\,ds.
\]
The last term on the right-hand side of \eqref{eq:czt} converges to
zero as $\mu \to 0$ because $(\nabla u_\mu)$ is bounded in
$L^1_{t,x}$ and $\nabla(G\cdot W) \in L^\infty_{t,x}$. We have thus
obtained
\begin{align*}
&\limsup_{\mu \to 0} \int_0^T\!\!\int_D
  \gamma_\mu(\nabla u_\mu(s)) \cdot \nabla u_\mu(s)\,dx\,ds\\
&\hspace{3em} \leq \frac12 \norm[\big]{u_0}^2 
- \frac12 \norm[\big]{u(T) - G \cdot W(T)}^2
+ \int_0^t\!\!\int_D \eta(s) \cdot \nabla (G \cdot W(s))\,dx\,ds.
\end{align*}
By Lemma~\ref{lm:testing} we have
\begin{align*}
  &\frac12 \norm[\big]{u_0}^2 
- \frac12 \norm[\big]{u(T) - G \cdot W(T)}^2
+ \int_0^T\!\!\int_D \eta(s) \cdot \nabla (G \cdot W(s))\,dx\,ds\\
&\hspace{3em} = \int_0^T\!\!\int_D \eta(s) \cdot \nabla u(s)\,dx\,ds,
\end{align*}
which implies that
\[
\limsup_{\mu \to 0} \int_0^T\!\!\int_D 
  \gamma_{\mu}(\nabla u_{\mu}) \cdot \nabla u_{\mu} \,dx\,ds
  \leq \int_0^T\!\!\int_D \eta\cdot\nabla u \,dx\,ds.
\]
Moreover, since
\[
\gamma_\mu(x)\cdot (I+\mu\gamma)^{-1}x=
\gamma_\mu(x) \cdot x - \mu\abs{\gamma_\mu(x)}^2 \leq 
\gamma_\mu(x)\cdot x
\]
for all $x \in \erre^n$, we obtain
\[
\limsup_{\mu \to 0} \int_0^T\!\!\int_D \gamma_{\mu}(\nabla u_\mu)
\cdot (I+\mu\gamma)^{-1} \nabla u_{\mu} \,dx\,ds
\leq \int_0^T\!\!\int_D \eta \cdot \nabla u\,dx\,ds,
\]
where $(I+\mu\gamma)^{-1}\nabla u_{\mu} \to \nabla u$ and
$\gamma_{\mu}(\nabla u_{\mu}) \to \eta$ weakly in $L^1_{t,x}$.  In
particular, the weak lower semicontinuity of convex integrals yields
\begin{align*}
&\int_0^T\!\!\int_D \bigl( k(\nabla u)+k^*(\eta) \bigr)\\
&\hspace{3em} \leq 
\liminf_{\mu \to 0} \int_0^T\!\!\int_D \bigl( k((I+\mu \gamma)^{-1} \nabla u_\mu)
   + k^*(\gamma_\mu(\nabla u_\mu)) \bigr)\,dx\,dt\\
&\hspace{3em} = \liminf_{\mu \to 0} \int_0^T\!\!\int_D
\gamma_\mu(\nabla u_\mu)\cdot(I+\mu \gamma)^{-1} \nabla u_\mu\,dx\,dt < N,
\end{align*}
where $N=N(\omega)$ is a constant.
Recalling that $\gamma_{\mu} \in \gamma((I+\mu\gamma)^{-1})$ and
$\gamma=\partial k$, we have
\[
  k((I+\mu\gamma)^{-1}\nabla u_{\mu})+
  \gamma_{\mu}(\nabla
  u_{\mu})\cdot(z-(I+\mu\gamma)^{-1}\nabla u_{\mu})
  \leq k(z) \qquad \forall z\in\erre^n.
\]
From this it follows, again by the weak lower semicontinuity of convex
integrals, that
\[
\int_0^T\!\!\int_D k(\nabla u)
+ \int_0^T\!\!\int_D \eta \cdot (\zeta - \nabla u)
\leq \int_0^T\!\!\int_D k(\zeta) 
\qquad \forall \zeta \in L^\infty_{t,x}.
\]
Let $A$ be an arbitrary Borel subset of $(0,T) \times D$,
$z_0 \in \erre^n$, $R>0$ a constant, and
\[
\zeta_R := z_0 1_{A} + T_R(\nabla u) 1_{A^c},
\]
where $T_R:\erre^n \to \erre^n$, is the truncation operator
\[
T_R: x \longmapsto
\begin{cases}
x, & \abs{x} \leq R,\\[4pt]
\displaystyle R x/\abs{x}, & \abs{x} > R.
\end{cases}
\]
Then $\zeta_R \in L^\infty_{t,x}$, and
\begin{align*}
&\int_A k(\nabla u) + \int_A \eta \cdot (z_0 - \nabla u)
\leq \int_A k(z_0)\\
&\hspace{3em} + \int_{A^c} \bigl( k(T_R(\nabla u)) - k(\nabla u) \bigr)
+ \int_{A^c} \eta \cdot \bigl( T_R(\nabla u) - \nabla u \bigr),
\end{align*}
where $T_R(\nabla u) \to \nabla u$ and $k(T_R(\nabla u)) \to k(\nabla u)$
a.e. in $(0,T) \times D$ as $R \to \infty$, as well as
\[
\abs[\big]{T_R(\nabla u) - \nabla u} \leq 2 \abs[\big]{\nabla u},
\qquad \abs[\big]{k(T_R(\nabla u)) - k(\nabla u)}
\lesssim 1 + k(\nabla u)
\]
(the latter inequality follows by the assumptions on the behavior of
$k$ at infinity).  Since $k(\nabla u)$, $k^*(\eta)
\in L^1_{t,x}$, the dominated convergence theorem implies that
\[
\int_A k(\nabla u) + \int_A \eta \cdot (z_0 - \nabla u)
\leq \int_A k(z_0)
\]
for arbitrary $z_0$ and $A$, hence also that
\[
k(\nabla u) + \eta \cdot (z_0 - \nabla u)
\leq k(z_0)
\]
a.e. in $(0,T) \times D$ for all $z_0 \in \erre^n$. By definition of
subdifferential it follows immediately that
$\eta = \gamma(\nabla u)$ a.e. in $(0,T) \times D$.

\medskip

Let us now show, still keeping $\omega$ fixed, that the limit $u$
constructed above is unique. In particular, since
$\eta = \gamma(\nabla u)$, it is also unique. Assume that there exist
$u_1$, $u_2$ such that
\[
u_i(t) - \int_0^t \div \gamma(\nabla u_i(s))\,ds  = u_0 + G
\cdot W(t), \qquad i=1,2,
\]
in $L^2$ for all $t\in [0,T]$. Setting $v = u_1-u_2$ and
$\zeta = \gamma(\nabla u_1) - \gamma(\nabla u_2)$, it is enough to
show that
\[
v(t) - \int_0^t \div \zeta(s)\,ds  = 0
\]
in $L^2$ for all $t\in [0,T]$ implies $v=0$. To this aim, it suffices to
note that, by Lemma \ref{lm:testing},
\[
\frac12 \norm[\big]{v(t)}^2
+ \int_0^t\!\!\int_D \zeta \cdot \nabla v = 0
\]
for all $t \in [0,T]$. The monotonicity of $\gamma$ immediately
implies $v=0$, i.e. $u_1=u_2$, so that uniqueness of $u$ is proved.

\medskip

The process $u$ has been constructed for each $\omega$ in a set of
probability one via limiting procedures along sequences that depend on
$\omega$ itself. Of course such a construction, in general, does not
produce a measurable process. In our situation, however, uniqueness of
$u$ allows us to even prove that $u$ is predictable. The following
simple observation is crucial: we have proved that from any
subsequence of $\lambda$ one can extract a further subsequence $\mu$,
depending on $\omega$, such that $u_\mu$ converges to $u$ as
$\mu \to 0$, in several topologies, and that the limit $u$ is unique.
This implies, by a classical criterion, that the same convergences
hold along the original sequence $\lambda$, which does \emph{not}
depend on $\omega$. In particular,
$u_\lambda(\omega,t) \to u(\omega,t)$ weakly in $L^2$ for all
$t\in[0,T]$ and for $\P$-a.s. $\omega$. Let us show that
$u_\lambda \to u$ weakly in $\bL^1 L^1_t L^2_x$: for an arbitrary
$\phi \in \bL^\infty L^\infty_t L^2_x$, we have
\[
\ip[\big]{u_\lambda(\omega,t)}{\phi(\omega,t)}
\longrightarrow \ip[\big]{u(\omega,t)}{\phi(\omega,t)}
\]
a.e. in $\Omega \times [0,T]$, as well as
\begin{align*}
\E\int_0^T \ip[\big]{u_\lambda(\omega,t)}{\phi(\omega,t)}^2\,dt
&\leq \E\int_0^T \norm[\big]{u_\lambda(\omega,t)}^2
      \norm[\big]{\phi(\omega,t)}^2\,dt\\
&\leq \norm[\big]{\phi}^2_{\bL^\infty L^\infty_t L^2_x}
      \E\int_0^T \norm[\big]{u_\lambda(\omega,t)}^2\,dt < N
\end{align*}
for a constant $N$ independent of $\lambda$, because $(u_\lambda)$ is
bounded in $\bL^2 L^2_{t,x}$ by Lemma~\ref{lm:aspetta}. Then
$\ip{u_\lambda}{\phi}$ is uniformly integrable in
$\Omega \times [0,T]$ by the criterion of de la Vall\'ee Poussin,
hence $\ip{u_\lambda}{\phi} \to \ip{u}{\phi}$ in $\bL^1 L^1_t$ by
Vitali's theorem. Since $\phi \in \bL^\infty L^\infty_t L^2_x$ is
arbitrary, it follows that $u_\lambda \to u$ weakly in
$\bL^1 L^1_t L^2_x$. Mazur's lemma (see, e.g., \cite[p.~360]{Bbk:EVT})
implies that there exists a sequence $(\zeta_n)$ of convex
combinations of $u_\lambda$ such that
$\zeta_n(\omega,t) \to u(\omega,t)$ in $L^2$ in
$\P \otimes dt$-measure, hence a.e. in $\Omega \times [0,T]$ along a
subsequence. Since $(u_\lambda)$ is a collection of $L^2$-valued
predictable processes, the same holds for $(\zeta_n)$, so that the
$\P \otimes dt$-a.e. pointwise limit $u$ of (a subsequence of)
$\zeta_n$ is an $L^2$-valued predictable process as well.
We also have that $u \in \bL^2 L^\infty_t L^2_x$, as it follows by
$u_\lambda \to u$ in $\bL^1 L^1_t L^2_x$ and the boundedness of
$(u_\lambda)$ in $\bL^2 L^\infty_t L^2_x$.

Moreover, recalling that $\nabla u_\lambda \to \nabla u$ and
$\gamma_\lambda(\nabla u_\lambda) \to \eta$ weakly in $L^1_{t,x}$
$\P$-a.s., and that, by Lemma~\ref{lm:L1a}, $(\nabla u_\lambda)$ and
$(\gamma_\lambda(\nabla u_\lambda))$ are bounded in $\bL^1 L^1_{t,x}$,
an entirely analogous argument shows that $\nabla u_\lambda \to \nabla
u$ and $\gamma_\lambda(\nabla u_\lambda) \to \eta=\gamma(\nabla u)$
weakly in $\bL^1 L^1_{t,x}$. This implies that $\eta$ is a measurable
adapted process, as well as, by weak lower 
semicontinuity of the norm,
\begin{align*}
  \E \norm[\big]{\nabla u}_{L^1_{t,x}} &\leq \liminf_{\lambda \to 0}
  \E \norm[\big]{\nabla u_{\lambda}}_{L^1_{t,x}} < \infty,\\
  \E \norm[\big]{\eta}_{L^1_{t,x}} &\leq \liminf_{\lambda \to 0}
  \E \norm[\big]{\gamma_\lambda(\nabla u_\lambda)}_{L^1_{t,x}} < \infty.
\end{align*}
We can hence conclude that
\begin{align*}
  &u \in \bL^2 L^\infty_t L^2_x \,\cap\, \bL^1 L^1_t W^{1,1}_0,\\
  &\eta = \gamma(\nabla u) \in \bL^1 L^1_{t,x}.
\end{align*}
Finally, Lemma~\ref{lm:aspetta} and \eqref{eq:pluto} yield
\begin{align*}
  &\E\int_0^T\!\!\int_D \bigl( k((I+\lambda\gamma)^{-1}\nabla u_\lambda)%
   + k^*(\gamma_\lambda(\nabla u_\lambda)) \bigr)\\
  &\hspace{3em} <  N \Bigl( \E\norm[\big]{u_0}^2
   + \E\int_0^T \norm[\big]{G(s)}^2_{\cL(H,L^2)}\,ds \Bigr),
\end{align*}
where, by the weak lower semicontinuity of convex integrals and
$(I+\lambda\gamma)^{-1}\nabla u_\lambda \to \nabla u$,
$\gamma_\lambda(\nabla u_\lambda) \to \eta$ weakly in $L^1_{t,x}$
$\P$-a.s., one has
\[
\int_0^T\!\!\int_D \bigl( k(\nabla u) + k^*(\eta) \bigr) \leq
\liminf_{\lambda \to 0} 
\int_0^T\!\!\int_D \bigl( k((I+\lambda\gamma)^{-1}\nabla u_\lambda)%
   + k^*(\gamma_\lambda(\nabla u_\lambda))\bigr)
\]
$\P$-a.s., hence, by Fatou's lemma,
\begin{equation}
\label{eq:ona}
\begin{split}
  \E\int_0^T\!\!\int_D \bigl( k(\nabla u) + k^*(\eta) \bigr)
  &\leq \liminf_{\lambda \to 0} \E
  \int_0^T\!\!\int_D \bigl(k((I+\lambda\gamma)^{-1}\nabla u_\lambda)%
   + k^*(\gamma_\lambda(\nabla u_\lambda))\bigr)\\
  &<  N \Bigl( \E\norm[\big]{u_0}^2
   + \E\int_0^T \norm[\big]{G(s)}^2_{\cL(H,L^2)}\,ds \Bigr) < \infty.
\end{split}
\end{equation}

\begin{rmk}
  The proof of uniqueness of $u$ does \emph{not} depend on $\gamma$
  being single-valued. In particular, all results on $u$ obtained thus
  far, including the predictability of $u$, can be obtained under the
  more general assumption that $\gamma$ is an everywhere defined
  maximal monotone graph on $\erre^n \times \erre^n$, with
  $\gamma=\partial k$. However, in this more general framework, the
  uniqueness of $\eta$ does \emph{not} follow, because the divergence
  is not injective. This implies that we would not be able even to
  prove that $\eta$ is a measurable process (with respect to the
  product $\sigma$-algebra of $\mathscr{F}$ and the Borel
  $\sigma$-algebra of $[0,T]$).
\end{rmk}


\ifbozza\newpage\else\fi
\section{Well-posedness with additive noise}
\label{sec:add}
We are now going to prove well-posedness for the equation
\begin{equation}
\label{eq:add}
du(t) - \div\gamma(\nabla u(t))\,dt = G(t)\,dW(t),
\qquad u(0)=u_0,
\end{equation}
where $G$ is no longer supposed to take values in $\cL^2(H,V_0)$, as
in the previous section, but just in $\cL^2(H,L^2)$. In other words,
we are considering equation \eqref{eq:0} with additive noise.

\begin{prop}  \label{prop:add}
  Assume that $u_0 \in \bL^2 L^2_x$ is $\cF_0$-measurable and that
  $G: \Omega \times [0,T] \to \cL^2(H,L^2)$ is measurable and adapted.
  Then equation \eqref{eq:V0} is well posed in $\mathscr{K}$.
  Moreover, its solution is pathwise weakly continuous with values in
  $L^2$.
\end{prop}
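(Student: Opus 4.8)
The plan is to approximate $G$ by processes with values in $\cL^2(H,V_0)$ and to pass to the limit, following the overall scheme of Section~\ref{sec:reg}, but carrying out every limiting argument \emph{in expectation} rather than pathwise, since $G\cdot W$ no longer enjoys any pathwise Sobolev regularity. Fix a separable Hilbert space $V_0$ as in Section~\ref{sec:reg} and an integer $m$ such that $(I-\varepsilon\Delta)^{-m}$ maps $L^2$ into $V_0$, and set $G_\varepsilon := (I-\varepsilon\Delta)^{-m}G$. Then $G_\varepsilon$ is measurable, adapted, takes values in $\cL^2(H,V_0)$, and $G_\varepsilon \to G$ in $\bL^2 L^2_t \cL^2(H,L^2)$ as $\varepsilon\to0$, by contractivity of $(I-\varepsilon\Delta)^{-m}$ in $L^2$ and dominated convergence. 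Proposition~\ref{prop:V0} then yields, for each $\varepsilon>0$, a strong solution $u_\varepsilon$ to \eqref{eq:add} with $G$ replaced by $G_\varepsilon$; moreover, as shown within the proof of that proposition (cf.\ the estimate on $\E\int(k((I+\lambda\gamma)^{-1}\nabla u_\lambda)+k^*(\gamma_\lambda(\nabla u_\lambda)))$), one has $u_\varepsilon \in \mathscr{K}$ with a $\mathscr{K}$-bound controlled by $\E\norm{u_0}^2 + \E\int_0^T\norm{G_\varepsilon}^2_{\cL^2(H,L^2)} \leq \E\norm{u_0}^2 + \E\int_0^T\norm{G}^2_{\cL^2(H,L^2)}$, uniformly in $\varepsilon$.

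First I would record the uniform a priori estimates. Applying the It\^o formula of Proposition~\ref{prop:ito} to $u_\varepsilon$ (with $\alpha=0$, $\zeta=\gamma(\nabla u_\varepsilon)$, $C=G_\varepsilon$; the integrability hypothesis holds since $u_\varepsilon\in\mathscr{K}$), taking supremum in time and expectation, and controlling the stochastic integral by Lemma~\ref{lm:DY}, one obtains, exactly as in Lemma~\ref{lm:aspetta},
\[
\norm[\big]{u_\varepsilon}_{\bL^2 L^\infty_t L^2_x}
+ \norm[\big]{\gamma(\nabla u_\varepsilon)\cdot\nabla u_\varepsilon}_{\bL^1 L^1_{t,x}}
\lesssim \norm[\big]{u_0}_{\bL^2 L^2_x} + \norm[\big]{G}_{\bL^2 L^2_t\cL^2(H,L^2)}.
\]
Since $\gamma(\nabla u_\varepsilon)\cdot\nabla u_\varepsilon = k(\nabla u_\varepsilon)+k^*(\gamma(\nabla u_\varepsilon))$ pointwise, the families $(k(\nabla u_\varepsilon))$ and $(k^*(\gamma(\nabla u_\varepsilon)))$ are bounded in $\bL^1 L^1_{t,x}$; superlinearity of $k$ and $k^*$, de la Vall\'ee Poussin's criterion, and the Dunford--Pettis theorem (as in Lemma~\ref{lm:L1a}) give relative weak compactness of $(\nabla u_\varepsilon)$ and $(\gamma(\nabla u_\varepsilon))$ in $\bL^1 L^1_{t,x}$. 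Along a subsequence, $u_\varepsilon\to u$ weakly-$*$ in $\bL^2 L^\infty_t L^2_x$, $\nabla u_\varepsilon\to\nabla u$ and $\gamma(\nabla u_\varepsilon)\to\eta$ weakly in $\bL^1 L^1_{t,x}$, while $G_\varepsilon\cdot W\to G\cdot W$ in $\bL^2 C_t L^2_x$ by It\^o's isometry and Doob's inequality. Passing to the limit in the regularized equation yields
\[
u(t) - \int_0^t \div\eta(s)\,ds = u_0 + \int_0^t G(s)\,dW(s),
\qquad t\in[0,T],
\]
first in the sense of distributions, hence in $L^2$ by difference. That $u$ has a predictable version follows by Mazur's lemma as in the proof of Proposition~\ref{prop:V0}, and $u\in\bL^1 L^1_t W^{1,1}_0$, $\eta\in\bL^1 L^1_{t,x}$ by weak lower semicontinuity of norms. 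Weak path continuity follows by writing $u = \bigl(u_0+\int_0^\cdot\div\eta\bigr) + G\cdot W$: the first summand lies in $C_t H^{-r}$ for some $r$ (as $\eta\in L^1_{t,x}$) and in $L^\infty_t L^2_x$, hence in $C_w([0,T];L^2)$ by the criterion of Strauss used in Section~\ref{sec:reg}, while $G\cdot W$ is strongly continuous in $L^2$. This gives $u(t)$ a meaning for every $t$, and, testing the equation against elements of $V_0\otimes L^\infty(\Omega)$ together with $G_\varepsilon\cdot W(t)\to G\cdot W(t)$ in $\bL^2 L^2_x$, one checks that $u_\varepsilon(t)\to u(t)$ weakly in $\bL^2 L^2_x$ for each $t$.

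The main obstacle is the identification $\eta=\gamma(\nabla u)$ a.e.\ in $\Omega\times(0,T)\times D$, which must be carried out by weak compactness and monotonicity alone, since the pathwise arguments of Section~\ref{sec:reg} (which relied on $G\cdot W$ being pathwise in $L^\infty_t V_0$) are no longer available. The key is an energy inequality. Evaluating the It\^o formula of Proposition~\ref{prop:ito} for $u_\varepsilon$ at $t=T$ and taking expectations (the stochastic term has zero mean, since $u_\varepsilon\in\bL^2 L^\infty_t L^2_x$ and $G_\varepsilon\in\bL^2 L^2_t\cL^2(H,L^2)$) gives
\[
\tfrac12\E\norm[\big]{u_\varepsilon(T)}^2 + \E\int_0^T\!\!\int_D \gamma(\nabla u_\varepsilon)\cdot\nabla u_\varepsilon
= \tfrac12\E\norm[\big]{u_0}^2 + \tfrac12\E\int_0^T \norm[\big]{G_\varepsilon(s)}^2_{\cL^2(H,L^2)}\,ds,
\]
whence, using $\E\norm{u(T)}^2\leq\liminf_\varepsilon\E\norm{u_\varepsilon(T)}^2$ and $G_\varepsilon\to G$, and then applying Proposition~\ref{prop:ito} to the limit equation (whose integrability hypothesis is secured by weak lower semicontinuity of the convex integral $\E\int(k(\nabla u)+k^*(\eta))$, which is finite by Fatou), one arrives at
\[
\limsup_{\varepsilon\to0} \E\int_0^T\!\!\int_D \gamma(\nabla u_\varepsilon)\cdot\nabla u_\varepsilon
\leq \E\int_0^T\!\!\int_D \eta\cdot\nabla u.
\]
A Minty-type argument then concludes: from $k(z)\geq k(\nabla u_\varepsilon)+\gamma(\nabla u_\varepsilon)\cdot(z-\nabla u_\varepsilon)$, integrating, taking $\liminf_\varepsilon$, and using weak lower semicontinuity of $v\mapsto\E\int\int k(v)$ together with the limsup inequality, one gets
\[
\E\int_0^T\!\!\int_D\bigl(k(\nabla u)+\eta\cdot(\zeta-\nabla u)\bigr)
\leq \E\int_0^T\!\!\int_D k(\zeta)
\qquad\forall\,\zeta\in\bL^\infty L^\infty_{t,x};
\]
localising to an arbitrary Borel subset of $(0,T)\times D$ and truncating $\nabla u$ exactly as in the proof of Proposition~\ref{prop:V0} yields $k(\nabla u)+\eta\cdot(z_0-\nabla u)\leq k(z_0)$ a.e.\ for all $z_0\in\erre^n$, i.e.\ $\eta=\gamma(\nabla u)$. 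Together with the $\mathscr{K}$-bounds this shows $u\in\mathscr{K}$ and completes existence.

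Finally, uniqueness within $\mathscr{K}$ — the role of which is precisely this step — is obtained as follows. Given two solutions $u_1,u_2\in\mathscr{K}$, set $v=u_1-u_2$ and $\zeta=\gamma(\nabla u_1)-\gamma(\nabla u_2)$, so that $v(t)-\int_0^t\div\zeta(s)\,ds=0$, $v(0)=0$, with no stochastic term. For $c>0$ small enough (depending only on the constant in $\limsup_{\abs{x}\to\infty}k(-x)/k(x)<\infty$), convexity of $k$ and $k^*$, $k(0)=k^*(0)=0$, and the Fenchel--Young inequality give $k(c\nabla v),\,k^*(c\zeta)\in\bL^1 L^1_{t,x}$, so Proposition~\ref{prop:ito} applies and yields
\[
\tfrac12\norm[\big]{v(t)}^2 + \int_0^t\!\!\int_D \zeta(s,x)\cdot\nabla v(s,x)\,dx\,ds = 0
\qquad\P\text{-a.s.},\ \forall\,t\in[0,T].
\]
Since $\zeta\cdot\nabla v\geq0$ by monotonicity of $\gamma$, it follows that $v\equiv0$, i.e.\ $u_1=u_2$.
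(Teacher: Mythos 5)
Your proposal is correct, but it takes a genuinely different route from the paper's at the central step. The paper does not rely on weak compactness of $(u^\varepsilon)$ alone: it first shows that $(u^\varepsilon)$ is a \emph{Cauchy sequence} in $\bL^2 L^\infty_t L^2_x$, by applying It\^o's formula to the difference $u^\varepsilon_\lambda-u^\delta_\lambda$ of the doubly regularized solutions, using the monotonicity of $\gamma_\lambda$, and then letting $\lambda\to0$ by lower semicontinuity; this yields $\E\norm{u^\varepsilon-u^\delta}^2_{L^\infty_tL^2_x}\lesssim\E\int_0^T\norm{G^\varepsilon-G^\delta}^2_{\cL^2(H,L^2)}$. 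The resulting strong convergence makes predictability of the limit immediate, gives $\norm{u^\varepsilon(T)}\to\norm{u(T)}$ in $\bL^2$ together with convergence of the stochastic integrals, hence a \emph{pathwise} inequality $\limsup_\varepsilon\int_0^T\!\int_D\gamma(\nabla u^\varepsilon)\cdot\nabla u^\varepsilon\leq\int_0^T\!\int_D\eta\cdot\nabla u$, after which Fatou and the $L^1$ maximal-monotonicity lemma of Barbu identify $\eta$. You instead keep only weak(-$*$) convergences and compensate by taking expectations in the energy identity, so that the martingale term vanishes and only the weak convergence $u_\varepsilon(T)\to u(T)$ in $\bL^2 L^2_x$ (which you justify correctly by testing the equation against $V_0\otimes L^\infty(\Omega)$) and weak lower semicontinuity of $\E\norm{\cdot}^2$ are needed; you then run the Minty/truncation argument of Section~\ref{sec:reg} on the product space $\Omega\times(0,T)\times D$ instead of invoking Barbu's lemma. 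Both routes are sound. Yours avoids the auxiliary double regularization and the Cauchy estimate, at the price of having to recover predictability via Mazur's lemma and of forgoing the strong convergence in $\bL^2 L^\infty_t L^2_x$ — which the paper gets for free and which is essentially the same computation as the continuous-dependence estimate \eqref{eq:clp} needed for the fixed-point argument of Section~\ref{sec:pf}. Your uniqueness step coincides with the paper's (Proposition~\ref{prop:ito} applied to the difference, with the integrability hypothesis secured by membership in $\mathscr{K}$ and the near-symmetry of $k$; your explicit verification of $k^*(c\zeta)\in\bL^1L^1_{t,x}$ for small $c$ is a point the paper leaves implicit); note only that the later contraction argument also requires the weighted variant with $e^{-\alpha t}$ and a nonzero noise difference, which your computation delivers with no extra work.
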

\begin{proof}
  Since one has $(I-\varepsilon\Delta)^{-m}: L^2 \to H^{2m} \cap
  H^1_0$ for any $m \in \mathbb{N}$, choosing $m > 1/2 +
  n/4$, the Sobolev embedding theorem yields $H^{2m} \embed
  W^{1,\infty}$, hence $V_0 := H^{2m} \cap H^1_0$ satisfies all
  hypotheses stated at the beginning of the previous section. In
  particular, setting
  \[
  G^\varepsilon := (I - \varepsilon \Delta)^{-m} G,
  \]
  the ideal property of Hilbert-Schmidt operators implies that
  $G^\varepsilon$ is an $\cL^2(H,V_0)$-valued measurable and adapted
  process such that
  \[
  \E\int_0^T \norm[\big]{G^\varepsilon(s)}^2_{\cL^2(H,V_0)}\,ds \lesssim
  \E\int_0^T \norm[\big]{G(s)}^2_{\cL^2(H,L^2)}\,ds < \infty.
  \]
  It follows by Proposition \ref{prop:V0} that, for any
  $\varepsilon>0$, there exists a unique predictable process
  \[
  u^\varepsilon \in \bL^2 L^\infty_t L^2_x \,\cap\, \bL^1 L^1_t W^{1,1}_0
  \]
  such that
  \begin{gather*}
    \eta^\varepsilon = \gamma(u^\varepsilon) \in \bL^1 L^1_{t,x},\\
    k(\nabla u^\varepsilon) + k^*(\eta^\varepsilon) \in L^1_{t,x}
    \quad \P\text{-a.s.},\\
    u^\varepsilon \in C_w([0,T];L^2) \quad \P\text{-a.s.},
  \end{gather*}
  satisfying
  \begin{equation}
    \label{eq:eps}
    u^\varepsilon(t) - \int_0^t \div\eta^\varepsilon(s)\,ds
    = u_0 + \int_0^t G^\varepsilon(s)\,dW(s)
  \end{equation}
  in $L^2$ for all $t \in [0,T]$. 

  In complete analogy to the previous section, the equation in
  $H^{-1}$
  \[
  u^\varepsilon_\lambda(t) 
  - \int_0^t \div\gamma_\lambda(\nabla u^\varepsilon_\lambda(s))\,ds 
  - \lambda \int_0 \Delta u^\varepsilon_\lambda(s)\,ds 
  = u_0 + \int_0^t G^\varepsilon(s)\,dW(s)
  \]
  admits a unique (variational) strong solution
  $u_\lambda^\varepsilon$ for any $\varepsilon>0$ and
  $\lambda>0$. Taking into account the monotonicity of
  $\gamma_\lambda$, It\^o's formula yields, for any $\delta>0$,
  \begin{align*}
  &\norm[\big]{u^\varepsilon_\lambda(t) - u^\delta_\lambda(t)}^2
  + \lambda \int_0^t
  \norm[\big]{\nabla(u^\varepsilon_\lambda(s) - u^\delta_\lambda(s))}^2\,ds\\
  &\hspace{3em} \lesssim \int_0^t
  \bigl( u^\varepsilon_\lambda(s) - u^\delta_\lambda(s) \bigr)
  \bigl( G^\varepsilon(s) - G^\delta(s) \bigr)\,dW(s)
  + \int_0^t \norm[\big]{G^\varepsilon(s) - G^\delta(s)}^2_{\cL^2(H,L^2)}\,ds.
  \end{align*}
  Taking supremum in time and expectation, it easily follows from
  Lemma \ref{lm:DY} that
  \[
  \E\sup_{t\leq T} \norm[\big]{u^\varepsilon_\lambda(t) - u^\delta_\lambda(t)}^2
  \lesssim \E\int_0^T
    \norm[\big]{G^\varepsilon(t) - G^\delta(t)}^2_{\cL^2(H,L^2)}.
  \]
  For arbitrary fixed $\varepsilon$, $\delta>0$, the proof of
  Proposition \ref{prop:V0} shows that
  \begin{align*}
    u^\varepsilon_\lambda &\longrightarrow u^\varepsilon
    & &\text{weakly* in } L^\infty_tL^2_x,\\
    \nabla u^\varepsilon_\lambda &\longrightarrow \nabla u^\varepsilon
    & &\text{weakly in } L^1_{t,x},\\
    \gamma_\lambda(\nabla u^\varepsilon_\lambda) 
    &\longrightarrow \eta^\varepsilon
    & &\text{weakly in } L^1_{t,x}
  \end{align*}
  $\P$-a.s. as $\lambda \to 0$, and the same holds replacing
  $\varepsilon$ with $\delta$. In particular, on a set of probability
  one,
  $u^\varepsilon_\lambda-u^\delta_\lambda \to u^\varepsilon-u^\delta$
  weakly* in $L^\infty_tL^2_x$ as $\lambda \to 0$, hence the weak* lower
  semicontinuity of the norm and Fatou's lemma imply
  \begin{align*}
  \E\norm[\big]{u^\varepsilon-u^\delta}^2_{L^\infty_tL^2_x} &\leq
  \liminf_{\lambda \to 0} \E
  \norm[\big]{u_\lambda^\varepsilon-u_\lambda^\delta}^2_{L^\infty_tL^2_x}\\
  &\lesssim \E\int_0^T
  \norm[\big]{G^\varepsilon(s) - G^\delta(s)}^2_{\cL^2(H,L^2))}\,ds.
  \end{align*}
  It follows by the ideal property of Hilbert-Schmidt operators, the
  contractivity of $(I-\varepsilon\Delta)^{-m}$, and the dominated
  convergence theorem, that
  \[
  \E\int_0^T \norm[\big]{G^\varepsilon(s) - G(s)}^2_{\cL^2(H,L^2))}\,ds 
  \longrightarrow 0
  \]
  as $\varepsilon \to 0$. This implies that $(u^\varepsilon)$ is a
  Cauchy sequence in $\bL^2 L^\infty_t L^2_x$, hence there exists a
  predictable $L^2$-valued process $u$ such that $u^\varepsilon$
  converges (strongly) to $u$ in $\bL^2 L^\infty_t L^2_x$ as
  $\varepsilon \to 0$.
  Moreover, by \eqref{eq:ona} there exists a constant $N$, independent
  of $\varepsilon$, such that
  \begin{equation}
    \label{eq:ekk}    
  \begin{split}
  &\E\int_0^T\!\!\int_D \bigl(
    k(\nabla u^\varepsilon) + k^*(\eta^\varepsilon)\bigr)\,dx\,ds\\
  &\hspace{3em} < N \Bigl( \E\norm[\big]{u_0}^2
   + \E\int_0^T \norm[\big]{G^\varepsilon(s)}^2_{\cL(H,L^2)}\,ds \Bigr)\\
  &\hspace{3em} \leq N \Bigl( \E\norm[\big]{u_0}^2
   + \E\int_0^T \norm[\big]{G(s)}^2_{\cL(H,L^2)}\,ds \Bigr),
  \end{split}
  \end{equation}
  where we have used again the ideal property of Hilbert-Schmidt
  operators and the contractivity of $(I-\varepsilon\Delta)^{-m}$ in
  the last step. The sequences $(\nabla u^\varepsilon)$ and
  $(\gamma(\nabla u^\varepsilon))$ are hence uniformly integrable on
  $\Omega \times [0,T] \times D$ by the criterion of de la Vall\'ee
  Poussin, hence relatively weakly compact in $\bL^1(L^1_{t,x})$ by the
  Dunford-Pettis theorem.  Therefore, passing to a subsequence of
  $\varepsilon$, denoted by the same symbol, there exist $v$ and
  $\eta$ such that $\nabla u^\varepsilon \to v$ and
  $\gamma(\nabla u^\varepsilon) \to \eta$ weakly in $\bL^1 L^1_{t,x}$
  as $\varepsilon \to 0$.  It is then straightforward to check
  that $v=\nabla u$ and
  \[
  u \in \bL^1 L^1_t W^{1,1}_0.
  \]
  An argument based on Mazur's lemma, entirely analogous to the one
  used in the proof of Proposition~\ref{prop:V0}, shows that $\eta$ is
  an $L^1$-valued adapted process.

  We can now pass to the limit as $\varepsilon \to 0$ in
  \eqref{eq:eps}. The strong convergence of $u^\varepsilon$ to $u$ in
  $\bL^2 L^\infty_t L^2_x$ implies that
  \[
  \operatorname*{ess\,sup}_{t\in[0,T]} \norm[\big]{u^\varepsilon(t)-u(t)} 
  \to 0
  \]
  in probability as $\varepsilon \to 0$. Let $\phi_0 \in V_0$ be
  arbitrary. Since $V_0 \embed L^\infty$, one has
  \[
  \ip[\big]{u^\varepsilon(t)}{\phi_0} \to \ip[\big]{u(t)}{\phi_0}
  \]
  in probability for almost all $t \in [0,T]$. Let us set, for an
  arbitrary but fixed $t \in [0,T]$,
  $\phi:s \mapsto 1_{[0,t]}(s) \phi_0 \in L^\infty_t V_0$.  Recalling
  that $\eta^\varepsilon=\gamma(\nabla u^\varepsilon) \to \eta$ weakly
  in $\bL^1 L^1_{t,x}$, it follows immediately that
  \begin{align*}
    - \int_0^t \ip{\div\eta^\varepsilon}{\phi_0}\,ds &=
    \int_0^T\!\!\int_D\eta^\varepsilon(s)\cdot\phi(s)\,ds \\
    &\quad \to \int_0^T\!\!\int_D\eta(s)\cdot\nabla\phi(s)\,ds
    = - \int_0^t \ip{\div\eta(s)}{\phi_0}\,ds
  \end{align*}
  weakly in $\bL^1$ as $\varepsilon \to 0$.
  Doob's maximal inequality and the convergence
  \[
  \E\int_0^T \norm[\big]{G^\varepsilon(t) - G(t)}_{\cL^2(H,L^2)}
  \longrightarrow 0
  \]
  as $\varepsilon \to 0$ readily yield also that
  $G^\varepsilon \cdot W(t) \to G \cdot W(t)$ in $L^2$ in probability
  for all $t \in [0,T]$. In particular, since $\phi_0\in V_0$ and
  $t \in [0,T]$ are arbitrary, we infer that
  \[
  u(t) - \int_0^t \div\eta(s)\,ds = 
  u_0 + \int_0^t B(s)\,dW(s)
  \]
  holds in $V_0'$ for almost all $t$. Recalling that
  $\eta \in L^1_{t,x}$, which implies in turn that
  $\div\eta\in L^1_t V_0'$, it follows that all terms except the first
  on the left-hand side have trajectories in $C_t V_0'$, hence
  that the identity holds for all $t \in [0,T]$. Moreover, thanks to
  Strauss' weak continuity criterion, $u \in C_t V_0'$ and
  $u \in L^\infty_t L^2_x$ imply $u \in C_w([0,T];L^2)$. Note also that
  all terms bar the second one on the left-hand side are $L^2$-valued,
  hence the identity holds also in $L^2$ for all $t \in [0,T]$.

  The weak convergences $\nabla u^\varepsilon \to \nabla u$ and
  $\eta^\varepsilon\to\eta$ in $\bL^1L^1_{t,x}$ and the weak lower
  semicontinuity of convex integrals yield, taking \eqref{eq:ekk} into
  account,
  \[
  \E\int_0^T\!\!\int_D \bigl(k(\nabla u)+k^*(\eta)\bigr) 
  < N \Bigl( \E\norm[\big]{u_0}^2
  + \E\int_0^T \norm[\big]{G(s)}^2_{\cL^2(H,L^2)}\,ds \Bigr).
  \]

  To complete the proof of existence, we only need to show that $\eta
  = \gamma(\nabla u)$ a.e.~in $\Omega \times (0,T) \times D$. Note
  that, by Proposition~\ref{prop:ito}, we have
  \begin{align*}
  &\frac{1}{2}\norm[\big]{u^\varepsilon(T)}^2
  + \int_0^T\!\!\int_D \eta^\varepsilon \cdot \nabla u^\varepsilon\\
  &\hspace{3em} = \frac12\norm[\big]{u_0}^2
  +\frac12\int_0^T\norm[\big]{G^\varepsilon(s)}^2_{\cL^2(H,L^2)}\,ds+
  \int_0^T u^\varepsilon(s)G^\varepsilon(s)\,dW(s)
  \end{align*}
  and
  \begin{align*}
  &\frac{1}{2}\norm[\big]{u(T)}^2
  + \int_0^T\!\!\int_D\eta\cdot\nabla u\\
  &\hspace{3em} = \frac12\norm[\big]{u_0}^2
  +\frac12\int_0^T\norm[\big]{G(s)}^2_{\cL^2(H,L^2)}\,ds+
  \int_0^Tu(s)G(s)\,dW(s),
  \end{align*}
  where, as $\varepsilon \to 0$, $\norm{u^\varepsilon(T)} \to
  \norm{u(T)}$ in $\bL^2$, thanks to the strong
  convergence of $u^\varepsilon$ to $u$ in $\bL^2L^\infty_tL^2_x$;
  \[
  \int_0^T\norm[\big]{G^\varepsilon(s)}^2_{\cL^2(H,L^2)}\,ds \longrightarrow
  \int_0^T\norm[\big]{G(s)}^2_{\cL^2(H,L^2)}\,ds
  \]
  in $\bL^2$ by an (already seen) argument involving the ideal
  property of Hilbert-Schmidt operators;
  \[
  \int_0^T u^\varepsilon(s)G^\varepsilon(s)\,dW(s) 
  \longrightarrow \int_0^Tu(s)G(s)\,dW(s)
  \]
  in $\bL^1$ as it follows by Lemma~\ref{lm:DY}. In particular, we infer
  \begin{align*}
  &\limsup_{\varepsilon\to0} \int_0^T\!\!\int_D
  \gamma(\nabla u^\varepsilon) \cdot \nabla u^\varepsilon \\
  &\hspace{3em} \leq \frac12\norm[\big]{u_0}^2-\frac{1}{2}\norm[\big]{u(T)}^2
  +\frac12\int_0^t\norm[\big]{G(s)}^2_{\cL^2(H,L^2)}\,ds+
  \int_0^tu(s)G(s)\,dW(s) \\
  &\hspace{3em} =\int_0^t\!\!\int_D\eta\cdot\nabla u,
  \end{align*}
  hence also, by Fatou's lemma,
  \[
  \limsup_{\varepsilon \to 0} \E\int_0^T\!\!\int_D
  \gamma(\nabla u^\varepsilon) \cdot \nabla u^\varepsilon
  \leq \E\int_0^t\!\!\int_D \eta \cdot \nabla u.
  \]
  Since $\nabla u^\varepsilon\to\nabla u$ and $\gamma(\nabla
  u^\varepsilon) \to \eta$ weakly in $\bL^1L^1_{t,x}$, recalling that
  $\gamma$ is maximal monotone, it follows that $\eta\in\gamma(\nabla
  u)$ a.e.~in $\Omega\times(0,T)\times D$ (see, e.g., \cite[Lemma~2.3,
  p.~38]{Barbu:type}).

  Let $u_{01}$, $u_{02} \in \bL^2L^2_x$ be $\mathscr{F}_0$-measurable,
  and $G_1$, $G_2:\Omega \times [0,T] \to \cL^2(H,L^2)$ be measurable
  adapted processes such that
  \[
  \E\int_0^T \norm[\big]{G_i(s)}^2_{\cL^2(H,L^2)}\,ds < \infty,
  \qquad i=1,2.
  \]
  If $u_i \in \mathscr{K}$, $i=1,2$, are solutions to
  \[
  du_i - \div \gamma(\nabla u_i)\,dt = G_i\,dW,
  \qquad u_i(0)=u_{0i},
  \]
  we are going to show that
  \begin{equation}
  \label{eq:lippo}
  \E\sup_{t\leq T} \norm[\big]{u_1(t) - u_2(t)}^2 \lesssim
  \E\norm[\big]{u_{01}-u_{02}}^2
  + \E\int_0^T \norm[\big]{G_1(s)-G_2(s)}^2_{\cL^2(H,L^2)}\,ds,
  \end{equation}
  from which uniqueness and Lipschitz-continuous dependence on the
  initial datum follow immediately. We shall actually obtain this
  estimate as a special case of a more general one that will be
  useful in the next section: setting
  \[
  y(t) := u_1(t) - u_2(t), \qquad y_0 := u_{01} - u_{02}, \qquad
  F(t) := G_1(t) - G_2(t),
  \]
  one has
  \[
  y(t) - \int_0^t \div\zeta(s)\,ds = y_0 + \int_0^t F(s)\,dW(s),
  \]
  where $\zeta = \gamma(\nabla u_1) - \gamma(\nabla u_2)$. Setting,
  for any $\alpha \geq 0$,
  \[
  y^\alpha(t) := e^{-\alpha t} y(t), \qquad
  \zeta(t) := e^{-\alpha t} \zeta(t), \qquad
  F^\alpha(t) := e^{-\alpha t} F(t),
  \]
  the integration by parts formula yields
  \[
  y^{\alpha}(t) 
  + \int_0^t \bigl( \alpha y^{\alpha}(s) - \div\zeta^{\alpha}(s) \bigr)\,ds 
  = y_0 + \int_0^t F^{\alpha}(s)\,dW(s),
  \]
  from which, by Proposition~\ref{prop:ito}, we deduce
  \begin{align*}
  &\norm[\big]{y^{\alpha}(t)}^2
  + 2\alpha \int_0^t \norm[\big]{y^{\alpha}(s)}^2\,ds
  + 2 \int_0^t\!\!\int_D \zeta^{\alpha}(s)\cdot\nabla y^{\alpha}(s)\,ds\\
  &\hspace{3em} \leq \norm[\big]{y_0}^2
  +2 \int_0^t y^{\alpha}(s) F^{\alpha}(s)\,dW(s)
  + \int_0^t \norm[\big]{F^{\alpha}(s)}_{\cL^2(H,L^2)}^2\,ds,
  \end{align*}
  where, by monotonicity of $\gamma$, $\zeta^\alpha \cdot \nabla
  y^\alpha = e^{-2\alpha \cdot} \bigl(\gamma(\nabla u_1) - \gamma(\nabla
  u_2)\bigr) \cdot (\nabla u_1-\nabla u_2) \geq 0$. 
  Therefore, taking the supremum in $t$ and expectation on both
  sides, one has
  \begin{align}
  &\E\sup_{t\leq T} \norm[\big]{y^\alpha(t)}^2 +
  \alpha \E\int_0^T \norm[\big]{y^\alpha(s)}^2\,ds\nonumber \\
  &\hspace{3em} \lesssim
  \E\norm[\big]{y_0}^2 + \E\sup_{t\leq T}
  \abs[\bigg]{\int_0^t y^\alpha(s) F^\alpha(s)\,dW(s)}
  + \E\int_0^T \norm[\big]{F^\alpha(s)}^2_{\cL^2(H,L^2)}\,ds\nonumber\\
  &\hspace{3em} \lesssim
  \E\norm[\big]{y_0}^2
  + \E\int_0^T \norm[\big]{F^\alpha(s)}^2_{\cL^2(H,L^2)}\,ds,
  \label{eq:clp}
  \end{align}
  where the second inequality follows by an application of
  Lemma~\ref{lm:DY}. Estimate \eqref{eq:lippo} is just the special
  case $\alpha=0$.
\end{proof}


\ifbozza\newpage\else\fi
\section{Proof of the main result}
\label{sec:pf}
Thanks to the results established thus far, we are now in the position
to prove Theorem~\ref{thm:main}.
Let $v:\Omega \times [0,T] \to L^2$ be a measurable adapted process
such that
\[
\E\int_0^T \norm[\big]{v(s)}^2\,ds < \infty,
\]
and consider the equation
\[
du(t) -\div\gamma(\nabla u(t))\,dt = B(t,v(t))\,dW(t),
\qquad u(0)=u_0,
\]
where $u_0$ is an $\mathscr{F}_0$-measurable $L^2$-valued random
variable with finite second moment.
The assumptions on $B$ imply that $B(\cdot,v)$ is measurable, 
adapted, and such that
\[
\E\int_0^T \norm[\big]{B(s,v(s))}^2_{\cL^2(H,L^2)}\,ds < \infty,
\]
hence the above equation is well-posed in $\mathscr{K}$ by Proposition
\ref{prop:add}, which allows one to define a map
$\Gamma:(u_0,v) \mapsto u$. Let $u_i=\Gamma(u_{0i},v_i)$, $i=1,2$,
where $u_{0i}$ and $v_i$ satisfy the same measurability and
integrability assumptions on $u_0$ and $v$, respectively. For any
$\alpha \geq 0$, \eqref{eq:clp} and the Lipschitz continuity of $B$ yield
\begin{align*}
  &\E\sup_{t\leq T} \bigl( e^{-2\alpha t} \norm[\big]{u_1(t) - u_2(t)}^2 \bigr)
   + \E\int_0^T e^{-2\alpha s} \norm[\big]{u_1(s)-u_2(s)}^2\,ds\\
  &\hspace{3em} \lesssim
  \frac{1}{\alpha} \E\norm[\big]{u_{01}-u_{02}}^2
  + \frac{1}{\alpha} \E\int_0^T e^{-2\alpha s}%
   \norm[\big]{B(s,v_1(s))-B(s,v_2(s))}^2_{\cL^2(H,L^2)}\,ds\\
  &\hspace{3em} \lesssim
  \frac{1}{\alpha} \E\norm[\big]{u_{01}-u_{02}}^2
  + \frac{1}{\alpha} \E\int_0^T e^{-2\alpha s}%
   \norm[\big]{v_1(s) - v_2(s)}^2\,ds.
\end{align*}
Choosing $\alpha$ large enough, it follows that, for any $u_0$ as
above, the map $v \mapsto \Gamma(u_0,v)$ is strictly contractive in
the Banach space $E_\alpha$ of measurable adapted processes $v$ such
that
\[
  \norm{v}_{E_\alpha} := \biggl( \E\int_0^T e^{-2\alpha s}
  \norm{v(s)}^2\,ds \biggr)^{1/2}.
\]
By the Banach fixed point theorem, the map $v \mapsto \Gamma(u_0,v)$
admits a unique fixed point $u$ in $E_\alpha$. Since all
$E_\alpha$-norms are equivalent for different values of $\alpha$, $u$
belongs to $E_0$ and, by definition of $\Gamma$, $u$ also belongs to
$\mathscr{K}$ and solves \eqref{eq:0}. Taking into account that
any solution to \eqref{eq:0} is necessarily a fixed point of $v
\mapsto \Gamma(u_0,v)$, it immediately follows that $u$ is the unique
solution to \eqref{eq:0} in $\mathscr{K}$. Lipschitz continuity of the
solution map follows from the above estimate, which manifestly implies
\[
\E\int_0^T \norm[\big]{u_1(s) - u_2(s)}^2\,ds \eqsim
\E\int_0^T e^{-2\alpha s} \norm[\big]{u_1(s) - u_2(s)}^2\,ds
\lesssim \E\norm[\big]{u_{01}-u_{02}}^2.
\]
and
\begin{align*}
\E\sup_{t\leq T} \norm[\big]{u_1(t) - u_2(t)}^2 &\eqsim
\E\sup_{t\leq T} \bigl( e^{-2\alpha t} \norm[\big]{u_1(t) - u_2(t)}^2 \bigr)\\
&\lesssim \E\int_0^T e^{-2\alpha s} \norm[\big]{u_1(s) - u_2(s)}^2\,ds,
\end{align*}
thus completing the proof.


\ifbozza\newpage\else\fi
\appendix
\section{A remark on uniform integrability}
The classical characterization of uniform integrability by de la
Vall\'ee Poussin states that, in the setting of a measure space
$(X,\mathcal{A})$ endowed with a finite measure $\mu$, a bounded
subset $\mathscr{G}$ of $L^1(X,\mu;\erre^n)$ is uniformly integrable
if and only if there exists a continuous increasing convex function
$\varphi:\erre_+ \to \erre_+$, with $\varphi(0)=0$ and
$\lim_{x\to\infty}\varphi(x)/x=\infty$, such that
\[
\int_A \varphi(\abs{g})\,d\mu < 1 \qquad \forall g \in \mathscr{G}
\]
(see, e.g., \cite[p.~12]{AFP-BV}).

\medskip

The following criterion for uniform integrability can be proved in the
same way (the proof is included for completeness).
\begin{lemma}
  Let $F:\erre^n \to \erre_+$ be a continuous convex function such
  that $F(0)=0$ and
  \[
  \lim_{\abs{x} \to \infty} \frac{F(x)}{\abs{x}} = \infty.
  \]
  Let $\mathscr{G}$ be a subset of $L^0(X,\mu;\erre^n)$ such that
  $F(\mathscr{G})$ is bounded in $L^1(X,\mu)$. Then $\mathscr{G}$
  is uniformly integrable.
\end{lemma}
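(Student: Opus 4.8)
The plan is to reduce the statement to the standard characterization of uniform integrability over a finite measure space: a subset of $L^1(X,\mu)$ is uniformly integrable precisely when it is bounded in $L^1$ and its integrals are uniformly absolutely continuous, i.e. $\sup_{g}\int_A\abs{g}\,d\mu \to 0$ as $\mu(A)\to 0$. First I would set $K:=\sup_{g\in\mathscr{G}}\int_X F(g)\,d\mu$, which is finite by hypothesis since $F\geq 0$, and, for $c>0$, define $m(c):=\inf_{\abs{x}\geq c}F(x)/\abs{x}$; the superlinear growth of $F$ at infinity is exactly the assertion that $m(c)\to+\infty$ as $c\to+\infty$, and $m$ is non-decreasing, so $m(c)>0$ for all sufficiently large $c$.

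The only real computation is an elementary estimate. For any measurable $A\subseteq X$, any $g\in\mathscr{G}$, and any $c$ with $m(c)>0$, I would split $A$ into the parts where $\abs{g}<c$ and where $\abs{g}\geq c$, and use $F(g)\geq m(c)\abs{g}$ on the latter, obtaining
\[
\int_A \abs{g}\,d\mu \;\leq\; c\,\mu(A) + \frac{1}{m(c)}\int_{\{\abs{g}\geq c\}} F(g)\,d\mu \;\leq\; c\,\mu(A) + \frac{K}{m(c)}.
\]
Everything then follows at once. Taking $A=X$ (of finite measure) and a fixed $c$ with $m(c)>0$ gives a bound on $\int_X\abs{g}\,d\mu$ uniform in $g$, so $\mathscr{G}$ is bounded in $L^1(X,\mu;\erre^n)$; in particular each $g\in\mathscr{G}$, a priori only an element of $L^0$, is actually integrable. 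Given $\varepsilon>0$, I would then choose $c$ with $K/m(c)<\varepsilon/2$ and afterwards $\delta>0$ with $c\,\delta<\varepsilon/2$, so that the displayed inequality yields $\int_A\abs{g}\,d\mu<\varepsilon$ for every $g\in\mathscr{G}$ whenever $\mu(A)<\delta$; equivalently, applying the estimate with $A=\{\abs{g}\geq c\}$ gives directly $\sup_{g\in\mathscr{G}}\int_{\{\abs{g}\geq c\}}\abs{g}\,d\mu\leq K/m(c)\to 0$ as $c\to\infty$. Hence $\mathscr{G}$ is uniformly integrable.

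I do not expect any genuine obstacle: the proof follows the same pattern as the classical argument for the de la Vallée Poussin criterion recalled just above. I would only remark that the argument uses solely the superlinear growth of $F$ together with the finiteness of $\mu$; neither the convexity of $F$ nor the normalization $F(0)=0$ is needed, these being assumed merely because in the applications $F$ is one of the convex potentials $k$ or $k^*$ evaluated at a gradient.
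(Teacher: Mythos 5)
Your proof is correct and follows essentially the same route as the paper's: the same split of $\int_A\abs{g}\,d\mu$ over the sets where $\abs{g}$ is below and above a threshold, the bound $\abs{g}\leq F(g)/M$ on the large set, and the choice first of the threshold and then of $\delta$. Your parametrization via $m(c)=\inf_{\abs{x}\geq c}F(x)/\abs{x}$ is just a repackaging of the paper's ``for every $M$ there is $R$'' step, and your closing remark that neither convexity nor $F(0)=0$ is used is accurate.
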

\begin{proof}
  We have to prove that $\mathscr{G}$ is bounded in $L^1(X,\mu)$ and
  that for any $\varepsilon>0$ there exists $\delta>0$ such that, for
  any $A \in \mathcal{A}$ with $\mu(A) < \delta$,
  \[
  \int_A \abs{g}\,d\mu < \varepsilon \qquad \forall g \in \mathscr{G}.
  \]
  By definition of limit, for any $M>0$ there exists $R$ (depending on
  $M$) such that $\abs{x}<F(x)/M$ for all $x\in\erre^n$ such that
  $\abs{x}>R$. Then
  \begin{align*}
  \int_A \abs{g}\,d\mu &= \int_{A \cap \{\abs{g} \leq R\}} \abs{g}\,d\mu 
  + \int_{A \cap \{\abs{g}>R\}} \abs{g}\,d\mu\\
  &\leq R\mu(A) + \frac{1}{M} \int_X F(g)\,d\mu
  \end{align*}
  for all $g \in \mathscr{G}$. Choosing $A=X$, this proves that
  $\mathscr{G}$ is bounded in $L^1(X,\mu)$. Let $\varepsilon>0$ be
  arbitrary, and choose $M$ such that the second-term on the
  right-hand side of the last inequality is smaller than
  $\varepsilon/2$. Then $\delta:=\varepsilon/(2R)$ satisfies the
  required condition.
\end{proof}

\ifbozza\newpage\else\fi

\def\polhk#1{\setbox0=\hbox{#1}{\ooalign{\hidewidth
  \lower1.5ex\hbox{`}\hidewidth\crcr\unhbox0}}}
\providecommand{\bysame}{\leavevmode\hbox to3em{\hrulefill}\thinspace}
\providecommand{\MR}{\relax\ifhmode\unskip\space\fi MR }
\providecommand{\MRhref}[2]{%
  \href{http://www.ams.org/mathscinet-getitem?mr=#1}{#2}
}
\providecommand{\href}[2]{#2}

\end{document}